\def\bbE{\mathbb{E}}
\def\bbN{\mathbb{N}}
\def\calP{\mathcal{P}}
\def\calW{\mathcal{W}}
\newcommand{\rbr}[1]{\left(#1\right)}
\newcommand{\sbr}[1]{\left[#1\right]}
\newcommand{\cbr}[1]{\left\{#1\right\}}
\newcommand{\abr}[1]{\left\langle#1\right\rangle}
\def\norm#1{\|#1\|}
\def\half{\frac 1 2}
\newcommand{\R}{\mathbb{R}}
\newcommand{\into}{\rightarrow}
\newcommand{\Lmax}{L_{\text{max}}}
\newcommand{\yk}{y_k}
\newcommand{\ykk}{y_{k+1}}
\newcommand{\vk}{v_k}
\newcommand{\vkk}{v_{k+1}}
\newcommand{\w}{w}
\newcommand{\wk}{w_k}
\newcommand{\wkk}{w_{k+1}}
\newcommand{\wopt}{w^*}
\newcommand{\xk}{x_k}
\newcommand{\xbar}{\xbar{w}}
\newcommand{\z}{z}
\newcommand{\zk}{z_{k}}
\newcommand{\etamin}{\eta_{\text{min}}}
\newcommand{\etak}{\eta_k}
\newcommand{\Ek}{\bbE_{\zk}}
\newcommand{\E}{\bbE}
\newcommand{\f}{f}
\newcommand{\grad}{\nabla f}
\newcommand{\sgc}{\rho}
\newcommand{\fls}{f_{\text{ls}}}
\newcommand{\gradls}{\nabla f_{\text{ls}}}
\newtheorem{theorem}{Theorem}%
\newtheorem{corollary}[theorem]{Corollary}%
\newtheorem{lemma}[theorem]{Lemma}%
\newtheorem{definition}[theorem]{Definition}%
\begin{document}

\title[Faster Convergence of Stochastic Accelerated Gradient Descent under Interpolation]%
{Faster Convergence of Stochastic Accelerated Gradient Descent under Interpolation}


\author*[1]{\fnm{Aaron} \sur{Mishkin}}\email{amishkin@cs.stanford.edu}
\author[2]{\fnm{Mert} \sur{Pilanci}}\email{pilanci@stanford.edu}
\author[3,4]{\fnm{Mark} \sur{Schmidt}}\email{schmidtm@cs.ubc.ca}

\affil[1]{\orgdiv{Department of Computer Science}, \orgname{Stanford University}}
\affil[2]{\orgdiv{Department of Electrical Engineering}, \orgname{Stanford University}}
\affil[3]{\orgdiv{Department of Computer Science}, \orgname{University of British Columbia}}
\affil[4]{\orgdiv{Canada CIFAR AI Chair}, \orgname{Amii}}



\abstract{
    \textcolor{red}{
        This preprint has a significant bug in its proofs.
        In particular, the claim that the generalized stochastic AGD scheme in
        \cref{eq:stochastic-agd} with the map $m$ set to be stochastic gradient
        update is equivalent to the standard ``momentum'' version of stochastic
        AGD in \cref{eq:stochastic-agd-simple} \textbf{does not hold}.
        Unfortunately, the bug invalidates the main conclusion of the paper ---
        namely that the dependence on the strong growth constant can be
        improved from \( \rho \) to \( \sqrt{\rho} \) for stochastic Nesterov
        acceleration.
        We are currently do not know if this issue can be corrected to obtain
        the desired \( \sqrt{\rho} \) dependence or if \( \rho \) is in fact
        tight.
        We will continue to explore this issue and post an updated version of
        the preprint if we obtain any new results.
        Note that all results hold as stated for the semi-stochastic scheme in
        \cref{eq:stochastic-agd}.
        Please see \cref{app:bug} for further details on the bug.
    }

    We prove new convergence rates for a generalized version of stochastic Nesterov acceleration
    under interpolation conditions.
    Unlike previous analyses, our approach accelerates any stochastic gradient
    method which makes sufficient progress in expectation.
    The proof, which proceeds using the estimating sequences framework, applies to
    both convex and strongly convex functions and is easily specialized to
    accelerated SGD under the strong growth condition.
    In this special case, our analysis reduces the dependence on the strong growth
    constant from \( \rho \) to \( \sqrt{\rho} \) as compared to prior work.
    This improvement is comparable to a square-root of the condition number in the
    worst case and address criticism that guarantees for stochastic acceleration
    could be worse than those for SGD.
}

\maketitle

\section{Introduction}\label{sec:intro}


A continuing trend in machine learning is the adoption of powerful prediction
models which can exactly fit, or \emph{interpolate}, their training data
\citep{zhang2017understanding}.
Methods such as over-parameterized neural networks \citep{zhang2013gradient,
    belkin2019reconciling}, kernel machines \citep{belkin2019datainterp}, and
boosting \citep{schapire1997boosting} have all been shown to achieve zero
training loss in practice.
This phenomena is particularly prevalent in modern deep learning, where
interpolation is conjectured to be key to both optimization
\citep{liu2022loss, oymak2019over} and generalization \citep{belkin21fit}.

Recent experimental and theoretical evidence shows stochastic gradient descent
(SGD) matches the fast convergence rates of deterministic gradient methods up
to problem-dependent constants when training interpolating models
\citep{arora2018overparameterization, ma2018power, zhou2019analysis}.
With additional assumptions, interpolation also implies the strong
\citep{polyak1987introduction} and weak \citep{bassily2018exponential,
    vaswani2019painless} growth conditions, which bound the second moment of the
stochastic gradients.
Under strong/weak growth, variance-reduced algorithms typically exhibit slower
convergence than stochastic gradient methods despite using more computation or
memory \citep{defazio2019effectiveness, ma2018power}, perhaps because these
conditions already imply a form of ``automatic variance reduction''
\citep{liu2022loss}.
A combination of interpolation and growth conditions has been used to prove
fast convergence rates for SGD with line-search \citep{vaswani2019painless},
with the stochastic Polyak step-size \citep{loizou2020sps, berrada2020sps},
for mirror descent \citep{dorazio2021smd}, and for model-based methods
\citep{asi2019stochastic}.

While these results show interpolation is sufficient to break the \(
\Omega(\epsilon^{-4}) \) complexity barrier for computing stationary points of
smooth, convex functions with stochastic, first-order oracles
\citep{arjevani2019lower}, significantly less work has been done to obtain the
accelerated rates possible in the deterministic setting
\citep{nemirovsky1985optimal}.
\citet{vaswani2019fast} analyze a stochastic version of Nesterov's accelerated
gradient method (AGD) \citep{nesterov1983method} under the strong growth
condition, but their bounds have a linear dependence on the strong growth
constant and can be slower than SGD \citep{liu2020accelerating}.
In contrast, \citet{liu2020accelerating} propose a modified version of
stochastic AGD and extend the statistical condition number approach of
\citet{jain2018accelerating} to the interpolation setting.
However, their results apply primarily to quadratics and are not accelerated
for general convex functions.

In this work, we apply the estimating sequences analysis developed by
\citet{nesterov2004lectures} to the interpolation setting.
Our approach hinges on a simple, in-expectation progress guarantee for SGD,
which we prove is a sufficient condition for generic acceleration of
stochastic algorithms.
This proof technique is completely different from that used by
\citet{vaswani2019fast} and yields an improved dependence on the strong growth
constant.
In the worst-case, the improvement is proportional to the square-root of the
condition number and guarantees stochastic AGD is always at least as fast as
SGD.
\
In what follows, all proofs are deferred to the corresponding
section of the appendix.

\subsection{Additional Related Work}

A large literature on stochastic optimization under interpolation rapidly
developed following the seminal work by \citet{bassily2018exponential} and
\citet{vaswani2019fast}.
For instance, \citet{xiao2022conditional} analyze Frank-Wolfe under
interpolation, \citet{vaswani2020adaptive} prove fast convergence for
Adagrad-type methods \citep{duchi2011adagrad}, and
\citet{meng2020fastandfurious} show fast rates for sub-sampled Newton method
under interpolation.
Interpolation has also been used to study last-iterate convergence of SGD
\citep{varre2021last} and subgradient methods \citep{fang2021subgradient}.



Interpolation is a key sufficient condition for growth conditions, which are a
set of general assumptions controlling the magnitude of the noise in
stochastic gradients.
Strong growth was introduced by \citet[Section 4.2.5]{polyak1987introduction},
who called the condition \emph{relative random noise} and used it to prove
q-linear convergence of SGD.
\citet{solodov1998incremental} and \citet{tseng1998incremental} later used a
variation of strong growth to analyze incremental gradient methods; their
variation was also used by \citet{schmidt2013fast} to prove linear
convergence of SGD with a constant step-size for strongly-convex functions.
\citet{vaswani2019fast} returned to the original definition given by
\citet{polyak1987introduction} and used it to analyze stochastic AGD, as we
do in this paper.


Many authors have tried to accelerate stochastic gradient methods, including
under a variety of growth conditions.
By accelerate, we mean improve the dependence on the condition number or
improve the order of convergence (i.e.\ from \( O(1/k) \) to \( O(1/k^2) \))
as compared to SGD.
For example, \citet{schmidt2011convergence} establish orders of growth on the
gradient noise which still permit stochastic proximal-gradient methods to be
accelerated.
In contrast, \citet{aspremont2008approximate} and \citet{devolder2014first}
assume bounded, deterministic gradient errors to derive convergence rates,
while \citet{cohen2018acceleration} develop a noise-resistant acceleration
scheme.
Most recently, \citet{chen2020convergence} analyze stochastic AGD under
expected smoothness, but their rate only holds under interpolation when the
strong growth constant is less than two.

As discussed above, \citet{liu2020accelerating} extend the approach of
\citet{jain2018accelerating} to the interpolation setting.
Their assumptions imply strong growth and the analysis is limited to
least-squares problems, although similar rates have been obtained for
continuized AGD applied to kernel least-squares \citep{even2021continuized}.
\citet{valls2022accelerated} take a different view and extend the work by
\citet{vaswani2019fast} to constrained optimization.
Unfortunately, none of these algorithms are necessarily accelerated and both
\citet{assran2020convergence} and \citet{liu2020accelerating} prove that
stochastic AGD may not obtain accelerated rates of convergence even under
interpolation.
We address this criticism and make detailed comparisons between convergence
rates in \cref{sec:discussion}.

\section{Assumptions}\label{sec:assumptions}


Consider the unconstrained minimization of a smooth, convex function \( f :
\R^d \into \R \).
We assume \( f \) has at least one minimizer \( \wopt \) and denote the
optimal set by \( \calW^* \).
At each iteration \( k \), we sample a stochastic gradient \( \grad(\wk, \zk)
\) such that, \[ \E_{\zk} \sbr{\grad(\wk, \zk)} = \grad(\wk), \] meaning the
stochastic gradient is unbiased.
We assume that \( \zk, \z_j \) are independent for \( k \neq j \), but they do
not need to have the same distribution.
The stochastic gradients satisfy the strong growth condition when
there exists \( \rho \geq 1 \) for which
\begin{equation}
    \label{eq:strong-growth}
    \E_{\zk} \sbr{\norm{\grad(\wk, \zk)}_2^2} \leq \rho \norm{\grad(\wk)}_2^2,
\end{equation}
holds given any sequence \( \cbr{\wk, \zk} \).
We say that interpolation is satisfied if \[ \nabla f(\w) = 0 \implies \nabla
    f(\w, \zk) = 0, \] for all \( \zk \).
That is, stationarity of \( f \) implies stationarity of the stochastic
gradients.
Although the strong growth condition implies interpolation, we will see that
the converse requires further assumptions on \( f \) and on the stochastic
gradients.

We assume that \( f \) is \( L \)-smooth, meaning \( \nabla f \) is \( L
\)-Lipschitz continuous.
\( L \)-smoothness of \( f \) implies the following
quadratic upper-bound for all \( w, u \in \R^d \) \citep{nesterov2004lectures}:
\begin{equation}
    \label{eq:smooth}
    f(u) \leq f(w) + \abr{\grad(w), u - w} + \frac{L}{2} \norm{u - w}_2^2.
\end{equation}
Similarly, we will sometimes require the stochastic gradients to be \( \Lmax \)-individually
smooth, meaning they satisfy
\begin{equation}
    \label{eq:individually-smooth}
    f(u, \zk) \leq f(w, \zk) + \abr{\grad(w, \zk), u - w}
    + \frac{\Lmax}{2} \norm{u - w}_2^2,
\end{equation}
almost surely for all \( k \).
We also assume that \( f \) is \( \mu \)-strongly convex,
by which we mean
\begin{equation}
    \label{eq:strongly-convex}
    f(u) \geq f(w) + \abr{\grad(w), u - w} + \frac{\mu}{2} \norm{u - w}_2^2,
\end{equation}
holds for all \( w, u \in \R^d \) and some \( \mu \geq 0 \).
When \( \mu = 0 \), strong convexity reduces to convexity of \( f \).
If \( f \) is strongly convex with \( \mu > 0 \), the stochastic gradients are
\( \Lmax \)-individually smooth, and interpolation holds, then the strong
growth constant is bounded as \( \rho \leq \Lmax / \mu \) (see
\cref{lemma:strong-growth-constant}).
Recalling that the ratio \( \kappa = L / \mu \) is the condition number of \(
f \), we see that the strong-growth constant is bounded by a quantity like the
condition number of the worst-conditioned stochastic function.
\footnote{Note that \( f(u, \zk) \) is typically not strongly convex, so this analogy is not formal.}

\section{Convergence of
  Stochastic AGD}\label{sec:convergence}

Our analysis in this section builds on the estimating sequences approach
developed by \citet{nesterov1988approach}.
However, we consider variable step-sizes \( \etak \) and allow the iterates \(
\wk \) to be set using a general update scheme.
The procedure takes \( \gamma_0 > 0 \) as input and uses the following updates,
which we call \emph{generalized stochastic AGD}:
\begin{equation}
    \label{eq:stochastic-agd}
    \begin{aligned}
        \alpha_{k}^2
             & = \etak (1 - \alpha_{k}) \gamma_k
        + \eta_{k} \alpha_{k} \mu                                             \\
        \gamma_{k+1}
             & = (1 - \alpha_k) \gamma_{k} + \alpha_k \mu                     \\
        \yk
             & = \frac{1}{\gamma_k + \alpha_k \mu} \sbr{\alpha_k \gamma_k \vk
        + \gamma_{k+1} \wk},                                                  \\
        \wkk
             & = m(\etak, \yk, \grad(\yk, \zk))                               \\
        \vkk & = \frac{1}{\gamma_{k+1}} \sbr{(1 - \alpha_k) \gamma_k \vk
            + \alpha_k \mu \yk - \alpha_k \grad(\yk)}.
    \end{aligned}
\end{equation}
where \( m \) is an as-yet unspecified update for the ``primal sequence''
\( \wk \) and \( v_0 = \w_0 \) (which implies \( y_0 = w_0 \)).
Note that the step-size \( \etak \) is required at the start of the iteration
to compute \( \alpha_k \).
As a result, local search methods like the Armijo line-search
\citep{armijo1966ls} must re-evaluate \( \gamma_{k+1}, \yk \), and \( \wkk \)
for each candidate step-size.

Choosing \( m \) to be one step of SGD with step-size \( \etak \) yields the
familiar updates of the standard version of stochastic AGD (see \citet[Eq.
    2.2.20]{nesterov2004lectures}).
(\textcolor{red}{\textbf{Warning}: The preceding claim is not true.  The use of a
    deterministic gradient in the update for \( \vkk \) in \cref{eq:stochastic-agd}
    breaks the standard argument that this scheme is equivalent to the ``momentum''
    form of stochastic AGD in \cref{eq:stochastic-agd-simple}.
    Please see \cref{app:bug} for more details.})
\begin{equation}
    \label{eq:stochastic-agd-simple}
    \begin{aligned}
        \wkk
         & = \wk - \etak \grad(\yk, \zk)                            \\
        \alpha_{k+1}^2
         & = (1 - \alpha_{k+1}) \alpha_k^2 \frac{\eta_{k+1}}{\etak}
        + \eta_{k+1} \alpha_{k+1} \mu                               \\
        \ykk
         & = \wkk
        + \frac{\alpha_k (1 - \alpha_k)}{\alpha_k^2 + \alpha_{k+1}} (\wkk - \w_k).
    \end{aligned}
\end{equation}
Our approach hinges on the fact that, under strong growth, stochastic gradient
updates for \( \wk \) give a similar per-step progress condition as
deterministic gradient descent.
Since descent in the primal step is the only link between \( \wk \) and the
``dual sequence'' \( \yk \), generalized stochastic AGD with any primal update
obtaining similar per-iteration progress can be analyzed in the same fashion
as stochastic AGD.
This allows us to derive a fast convergence rate for the general scheme in
\cref{eq:stochastic-agd}.

We start by deriving the progress condition for SGD.
\
It is straightforward to prove the following bound using \( L \)-smoothness,
the definition of \( \wkk \), and strong growth:
\begin{restatable}{lemma}{progress}\label{lemma:descent-lemma}
    Suppose \( f \) is \( L \)-smooth, the strong growth condition holds, and \(
    \etak \) is independent of \( \zk \).
    Then the stochastic gradient step in \cref{eq:stochastic-agd-simple} makes
    progress as,
    \begin{equation}
        \label{eq:descent-lemma}
        \E_{\zk}\sbr{f(\wkk)} \leq f(\yk) - \etak(1 - \frac{\etak \rho L}{2})\norm{\grad(\yk)}_2^2.
    \end{equation}
\end{restatable}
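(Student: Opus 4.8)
The plan is a short, direct computation that isolates exactly where $L$-smoothness, unbiasedness, and strong growth each enter. First I would instantiate the quadratic upper bound \eqref{eq:smooth} at $w = \yk$ and $u = \wkk$, and substitute the primal update $\wkk = \yk - \etak \grad(\yk, \zk)$ from \eqref{eq:stochastic-agd-simple}. This gives the deterministic-looking bound
\[
    f(\wkk) \leq f(\yk) - \etak \abr{\grad(\yk), \grad(\yk, \zk)}
    + \frac{L \etak^2}{2} \norm{\grad(\yk, \zk)}_2^2 ,
\]
where the only random quantities on the right are $\grad(\yk, \zk)$ (and possibly $\etak$, which we will handle by assumption).

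Next I would take the conditional expectation $\Ek$ over $\zk$. Since $\yk$ is a measurable function of the iterates produced before $\zk$ is drawn, and $\etak$ is independent of $\zk$ by hypothesis, unbiasedness $\Ek[\grad(\yk,\zk)] = \grad(\yk)$ collapses the inner-product term to $-\etak \norm{\grad(\yk)}_2^2$ and leaves $\tfrac{L\etak^2}{2}\,\Ek[\norm{\grad(\yk,\zk)}_2^2]$. I would then apply the strong growth condition \eqref{eq:strong-growth} — with the role of $\wk$ played by $\yk$ — to bound $\Ek[\norm{\grad(\yk,\zk)}_2^2] \leq \sgc \norm{\grad(\yk)}_2^2$, and collect terms:
\[
    \Ek[f(\wkk)] \leq f(\yk) - \etak \norm{\grad(\yk)}_2^2
    + \frac{\sgc L \etak^2}{2}\norm{\grad(\yk)}_2^2
    = f(\yk) - \etak\rbr{1 - \frac{\etak \sgc L}{2}}\norm{\grad(\yk)}_2^2 ,
\]
which is precisely \eqref{eq:descent-lemma}.

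I do not expect a genuine obstacle here; the work is entirely bookkeeping. The two points that need care are: (i) that strong growth, stated ``given any sequence $\cbr{\wk, \zk}$'', is indeed legitimately invoked at the interpolated point $\yk$, which is fine because $\yk$ depends only on randomness realized before $\zk$; and (ii) that the independence of $\etak$ from $\zk$ is what licenses pulling $\etak^2$ out of the expectation — absent this (e.g.\ for a step-size chosen by line-search on $\zk$) one would need a different argument. Note that neither convexity nor strong convexity of $f$ is used in this lemma.
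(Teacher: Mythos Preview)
Your proposal is correct and follows essentially the same approach as the paper: apply $L$-smoothness at $(\yk,\wkk)$, substitute the SGD step, take $\Ek$, use unbiasedness on the inner product and strong growth on the squared norm, then collect terms. Your added remarks on measurability of $\yk$, the role of the independence assumption on $\etak$, and the fact that convexity is not used are all accurate and make explicit what the paper leaves implicit.
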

Substituting any fixed step-size \( \etak \leq 1 / \rho L \) into
\cref{eq:descent-lemma} gives the following equation, which we call
the \emph{expected progress condition}:
\begin{equation}
    \label{eq:progress}
    \E_{\zk}\sbr{f(\wkk)} \leq f(\yk) - \frac{\etak}{2} \norm{\grad(\yk)}_2^2,
\end{equation}
which is equivalent to the progress made by gradient descent with
step-size \( \etak \leq 1 / L \) up to a factor of \( \rho \)
\citep{bertsekas1997nonlinear}.
In order to make use of the expected progress condition, we now introduce the
estimating sequences framework.
\begin{definition}[Estimating Sequences]
    Two sequences \( \lambda_k \), \( \phi_k \) are estimating sequences
    if the following hold almost surely:
    (i) \( \lambda_k \geq 0 \)
    (ii) \( \lim \lambda_k = 0 \),
    and (iii) for all \( w \in \R^d \),
    \begin{equation}
        \label{eq:estimating-sequence}
        \phi_k(w)
        \leq (1 - \lambda_k) f(w) + \lambda_k \phi_0(w).
    \end{equation}
\end{definition}
Unlike the standard definition, we permit \( \lambda_k \) and \( \phi_k \) to
depend on \( z_0, \ldots, z_{k-1} \), making them random variables.
Typically the first function \( \phi_0 \) is deterministic and chosen to
satisfy \( \phi_0(w) \geq f(w) \) for all \( w \) near \( w_0 \).
Since \cref{eq:estimating-sequence} guarantees \( \lim_k \phi_k(w) \leq f(w)
\), \( \phi_k \) can be interpreted as a sequence of relaxing upper-bounds,
where the rate of relaxation is controlled by \( \lambda_k \).
The next condition captures when \( \phi_k \) is a good local model of \( f
\).
\begin{definition}
    \label{def:local-upper-bound}
    The local upper-bound property holds in expectation if
    \begin{equation}
        \label{eq:local-upper-bound}
        \E_{\z_0, \ldots, \z_{k-1}}\sbr{\f(\wk)} \leq \E_{\z_0, \ldots, \z_{k-1}}\sbr{\inf_u \phi_k(u)}.
    \end{equation}
\end{definition}
In what follows, we use \( \E \) without subscripts to denote the total
expectation with respect to \( \z_0, \ldots, \z_{k-1} \).
That is, all randomness in the procedure up to iteration \( k \).
If \( \phi_k \) maintains the local upper-bound property in expectation,
then \cref{eq:estimating-sequence} guarantees
\begin{align}
    \E\sbr{f(\wk)}
    \leq \E\sbr{\inf_{u} \phi_k(u)}
     & \leq \E\sbr{\phi_k(\wopt)}
    \leq \E\sbr{(1 - \lambda_k) f(\wopt) + \lambda_k \phi_0(\wopt)} \nonumber              \\
    \implies \E\sbr{f(\wk)} - f(\wopt)
     & \leq \E\sbr{\lambda_k(\phi_0(\wopt) - f(\wopt))}, \label{eq:estimating-convergence}
\end{align}
which shows that generalized stochastic AGD converges in expectation at a rate
controlled by \( \lambda_k \).
As a result, the bulk of our analysis focuses on establishing the local
upper-bound property for a suitable choice of estimating sequences.

Following \citet{nesterov2004lectures}, choose \( \lambda_0 = 1\),
\( \phi_0(w) = f(w_0) + \frac{\gamma_0}{2}\norm{w - w_0}_2^2 \), and
\begin{equation}
    \label{eq:sequence-choice}
    \begin{aligned}
        \lambda_{k+1}
         & = (1 - \alpha_k) \lambda_k \\
        \phi_{k+1}(w)
         & = (1 - \alpha_k) \phi_k(w)
        + \alpha_k \rbr{f(\yk) + \abr{\grad(\yk), w - \yk}
            + \frac{\mu}{2}\norm{w - \yk}_2^2}.
    \end{aligned}
\end{equation}
The initial curvature \( \gamma_0 \) is an input parameter; differentiating
shows that \( \vkk \) is actually the minimizer of \( \phi_{k+1} \), while \(
\nabla^2 \phi_{k+1} = \gamma_{k+1} I \) (\cref{lemma:canonical-form}).
Thus, the auxillary sequences \( \gamma_k, \vk \) can be viewed as arising
from our choice of local model.
The next lemma proves these are valid estimating sequences when the step-size
sequence is well-behaved.
In what follows, we use the convention \( 1 / 0 = \infty \) to cover the case
of non-strongly convex functions.
\begin{restatable}{lemma}{sequenceChoice}\label{lemma:sequence-choice}
    Assume \( f \) is \( \mu \)-strongly convex with \( \mu \geq 0 \) and \(
    \etamin \leq \etak < 1 / \mu \) almost surely.
    Then \( \lambda_k \) and \( \phi_k \) given in \cref{eq:estimating-sequence}
    are estimating sequences.
\end{restatable}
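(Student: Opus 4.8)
The plan is to verify the three defining properties of an estimating sequence in turn; since the step-sizes $\etak$ may be random (e.g.\ from a line-search), every estimate will be established pathwise, so that all conclusions hold almost surely under the hypothesis $\etamin \le \etak < 1/\mu$.

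\textbf{Positivity of $\gamma_k$ and $\alpha_k \in (0,1)$, giving property (i).} First I would show by simultaneous induction that $\gamma_k > 0$ and $\alpha_k \in (0,1)$ for every $k$. Given $\gamma_k > 0$ (which holds at $k=0$ since $\gamma_0 > 0$), the relation $\alpha_k^2 = \etak(1-\alpha_k)\gamma_k + \etak\alpha_k\mu$ from \cref{eq:stochastic-agd} is equivalent to $g(\alpha_k) = 0$ for the upward-opening parabola $g(\alpha) = \alpha^2 + \etak(\gamma_k - \mu)\alpha - \etak\gamma_k$. Since $g(0) = -\etak\gamma_k < 0$ and $g(1) = 1 - \etak\mu > 0$ (using $\etak < 1/\mu$, which is vacuous when $\mu = 0$), $g$ has a unique root $\alpha_k$ in $(0,1)$; then $\gamma_{k+1} = (1-\alpha_k)\gamma_k + \alpha_k\mu > 0$, closing the induction. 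Consequently $\lambda_{k+1} = (1-\alpha_k)\lambda_k$ defines a strictly decreasing sequence lying in $(0,1]$ with $\lambda_0 = 1$, so in particular $\lambda_k \ge 0$.

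\textbf{The relaxed upper bound, property (iii).} I would prove $\phi_k(w) \le (1-\lambda_k)f(w) + \lambda_k\phi_0(w)$ for all $w$ by induction on $k$. The base case $k=0$ is an equality because $\lambda_0 = 1$. For the step, I expand $\phi_{k+1}$ via \cref{eq:sequence-choice}, apply the inductive hypothesis to the term $(1-\alpha_k)\phi_k(w)$, and use $\mu$-strong convexity \cref{eq:strongly-convex} in the form $f(\yk) + \abr{\grad(\yk), w-\yk} + \tfrac{\mu}{2}\norm{w-\yk}_2^2 \le f(w)$ to control the remaining term. Collecting coefficients gives $\phi_{k+1}(w) \le \bigl(1 - (1-\alpha_k)\lambda_k\bigr) f(w) + (1-\alpha_k)\lambda_k\,\phi_0(w)$, which is the claim with $\lambda_{k+1} = (1-\alpha_k)\lambda_k$.

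\textbf{Decay $\lambda_k \to 0$, property (ii), and the main obstacle.} This is the step requiring the most care. When $\mu > 0$, dropping the nonnegative summand in the $\alpha_k$ equation gives $\alpha_k^2 \ge \etak\alpha_k\mu$, hence $\alpha_k \ge \etak\mu \ge \etamin\mu > 0$ and $\lambda_k \le (1-\etamin\mu)^k \to 0$. When $\mu = 0$ there is no linear lower bound on $\alpha_k$, and I would instead extract a (near-)conserved quantity: from $\gamma_{k+1} = (1-\alpha_k)\gamma_k + \alpha_k\mu \ge (1-\alpha_k)\gamma_k$ together with $\lambda_{k+1} = (1-\alpha_k)\lambda_k$, telescoping yields $\gamma_k \ge \lambda_k\gamma_0$ (with equality when $\mu = 0$). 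Substituting into $\alpha_k^2 \ge \etak(1-\alpha_k)\gamma_k \ge \etamin\gamma_0\lambda_{k+1}$ and using the elementary bound $\tfrac{1}{\sqrt{\lambda_{k+1}}} - \tfrac{1}{\sqrt{\lambda_k}} \ge \tfrac{\alpha_k}{2\sqrt{\lambda_{k+1}}} \ge \tfrac12\sqrt{\etamin\gamma_0}$, telescoping over $k$ gives $\lambda_k \le \bigl(1 + \tfrac{k}{2}\sqrt{\etamin\gamma_0}\bigr)^{-2} \to 0$. Every inequality above holds for each realization of the step-sizes, so all three properties hold almost surely and $\lambda_k,\phi_k$ are estimating sequences. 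The main obstacle is the $\mu = 0$ case: one must first recognize that $\gamma_k/\lambda_k$ is essentially invariant before Nesterov's square-root telescoping argument can be set up, whereas the $\mu > 0$ case is comparatively immediate.
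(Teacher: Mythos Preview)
Your proof is correct and follows essentially the same approach as the paper: both establish $\alpha_k \in (0,1)$ from the quadratic relation (you via a sign-change at $0$ and $1$, the paper via the quadratic formula), prove property (iii) by the same induction using $\mu$-strong convexity, and obtain $\lambda_k \to 0$ via Nesterov's square-root telescoping. The only cosmetic differences are that you inline the decay argument and split on $\mu > 0$ versus $\mu = 0$, whereas the paper defers to a separate lemma (\cref{lemma:lambda-convergence}) that splits on $\gamma_0 = \mu$ versus $\gamma_0 > \mu$; your version is in fact slightly more self-contained since it covers all $\gamma_0 > 0$ without the range restriction that lemma imposes.
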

The parameter \( \etamin > 0 \) is required for \( \lambda_k \) to decrease
sufficiently fast, while the upper-bound \( \etak \leq 1 / \mu \) is only
necessary when \( \mu > 0 \).
In this case, it guarantees \( \lambda_k \geq 0 \).
This choice of estimating sequences also satisfies the local error-bound
property in expectation when \( m(\etak, \yk, \grad(\yk, \zk)) \) matches the
progress of fixed step-size SGD.
\begin{restatable}{proposition}{localUpperBound}\label{prop:local-upper-bound}
    If \( f \) is \( L \)-smooth and \( \mu \)-strongly convex with \( \mu \geq 0
    \), \( \etamin \leq \etak < 1 / \mu \) almost surely for every \( k \in \bbN
    \), and the primal update \( m(\etak, \yk, \grad(\yk, \zk)) \) satisfies the
    sufficient progress condition (\cref{eq:progress}), then \( \phi_k \) has the
    local upper-bound property in expectation.
    That is, for every \( k \in \bbN \),
    \begin{equation*}
        \E\sbr{f(\wk)} \leq \E\sbr{\inf_u \phi_k(u)}.
    \end{equation*}
\end{restatable}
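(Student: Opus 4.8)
The plan is to run the estimating-sequences induction of \citet{nesterov2004lectures}, but --- because the primal update \( m \) is stochastic --- to maintain the local upper bound only in expectation. By \cref{lemma:canonical-form}, every \( \phi_k \) is a quadratic \( \phi_k(w) = \phi_k^* + \frac{\gamma_k}{2}\norm{w - \vk}_2^2 \) with minimum value \( \phi_k^* = \inf_u \phi_k(u) \) attained at \( \vk \), so it suffices to prove by induction on \( k \) that \( \E\sbr{f(\wk)} \leq \E\sbr{\phi_k^*} \). The base case is immediate: \( \phi_0^* = f(w_0) \) and \( w_0 \) is deterministic, so both sides equal \( f(w_0) \).

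For the inductive step I would first derive the classical scalar recursion for \( \phi_{k+1}^* \). Evaluating the definition of \( \phi_{k+1} \) in \cref{eq:sequence-choice} at \( w = \yk \), comparing with the canonical form \( \phi_{k+1}(\yk) = \phi_{k+1}^* + \frac{\gamma_{k+1}}{2}\norm{\yk - \vkk}_2^2 \), and substituting the closed form of \( \vkk \) from \cref{eq:stochastic-agd}, one obtains
\[
\phi_{k+1}^* = (1 - \alphak)\phi_k^* + \alphak f(\yk) - \frac{\alphak^2}{2\gamma_{k+1}}\norm{\grad(\yk)}_2^2 + \frac{\alphak(1-\alphak)\gamma_k}{\gamma_{k+1}}\rbr{\frac{\mu}{2}\norm{\yk - \vk}_2^2 + \abr{\grad(\yk), \vk - \yk}}.
\]
Now split \( \phi_k^* = f(\wk) + (\phi_k^* - f(\wk)) \), lower-bound \( f(\wk) \geq f(\yk) + \abr{\grad(\yk), \wk - \yk} \) by convexity, and observe that the two terms linear in \( \grad(\yk) \) cancel exactly: the vector \( (1-\alphak)(\wk - \yk) + \frac{\alphak(1-\alphak)\gamma_k}{\gamma_{k+1}}(\vk - \yk) \) vanishes precisely because \( \yk = \frac{1}{\gamma_k + \alphak\mu}\sbr{\alphak\gamma_k\vk + \gamma_{k+1}\wk} \) in \cref{eq:stochastic-agd} (using \( \gamma_k + \alphak\mu = \gamma_{k+1} + \alphak\gamma_k \)). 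Dropping the nonnegative \( \frac{\mu}{2}\norm{\cdot}_2^2 \) term and using \( \alphak^2 = \etak\gamma_{k+1} \), which follows from the definitions of \( \alphak \) and \( \gamma_{k+1} \) in \cref{eq:stochastic-agd}, leaves the pathwise inequality
\[
\phi_{k+1}^* \geq f(\yk) - \frac{\etak}{2}\norm{\grad(\yk)}_2^2 + (1 - \alphak)\rbr{\phi_k^* - f(\wk)}.
\]

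To close the induction I would invoke the expected progress condition \cref{eq:progress}, which gives \( f(\yk) - \frac{\etak}{2}\norm{\grad(\yk)}_2^2 \geq \Ek\sbr{f(\wkk)} \), and hence \( \phi_{k+1}^* \geq \Ek\sbr{f(\wkk)} + (1 - \alphak)(\phi_k^* - f(\wk)) \) pathwise. The point that makes this substitution legitimate --- and that is the genuinely new ingredient relative to Nesterov's deterministic proof --- is that \( \phi_{k+1}^* \), \( \yk \), \( \alphak \), and \( \gamma_{k+1} \) depend only on \( z_0, \dots, z_{k-1} \): the recursion for \( \phi_{k+1} \) uses the \emph{exact} gradient \( \grad(\yk) \), not the stochastic one, so all the randomness in \( z_k \) is confined to \( f(\wkk) \), which is exactly what \cref{eq:progress} controls. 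Taking the total expectation \( \E \), applying the tower rule to the \( \Ek\sbr{f(\wkk)} \) term, and using \( 0 \leq \alphak \leq 1 \) (which follows from \( \etak < 1/\mu \); cf.\ \cref{lemma:sequence-choice}) together with the inductive hypothesis \( \E\sbr{\phi_k^*} \geq \E\sbr{f(\wk)} \) on the remaining term, yields \( \E\sbr{\phi_{k+1}^*} \geq \E\sbr{f(\wkk)} \), completing the induction.

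I expect the main obstacle to be essentially computational: deriving the scalar recursion for \( \phi_{k+1}^* \) and, in particular, verifying the exact cancellation of the terms linear in \( \grad(\yk) \), which is what pins down the coupling \( \yk \) in \cref{eq:stochastic-agd}. The one conceptual subtlety is the measurability observation above --- that \( \phi_{k+1}^* \) is a function of the randomness strictly before step \( k \) --- which is what lets the in-expectation guarantee \cref{eq:progress} play the role of the pathwise descent step in the classical argument. (A minor caveat: if the step-sizes, hence \( \alphak \) and \( \gamma_k \), are themselves random, the very last expectation step needs slightly more care; for a deterministic step-size schedule, as used by the theorems invoking this proposition, it is immediate.)
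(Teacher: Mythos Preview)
Your proposal is correct and follows essentially the same route as the paper: both run Nesterov's estimating-sequences induction, invoke the canonical form of \( \phi_{k+1}^* \) from \cref{lemma:canonical-form}, use convexity to replace \( f(\wk) \) by its linearization at \( \yk \), exploit the definition of \( \yk \) to cancel the inner-product terms, substitute \( \alpha_k^2/\gamma_{k+1} = \etak \), apply the expected progress condition \cref{eq:progress}, and drop the nonnegative \( \mu \)-term. Your presentation is slightly tidier in that you derive a pathwise inequality before taking expectations and you explicitly flag both the measurability of \( \phi_{k+1}^* \) with respect to \( z_0,\dots,z_{k-1} \) and the caveat about random step-sizes when passing \( (1-\alpha_k) \) through the expectation, but these are expository rather than substantive differences.
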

\cref{prop:local-upper-bound} is our main theoretical contribution and immediately
leads to two accelerated convergence rates for the generalized stochastic AGD
scheme.
\begin{restatable}{theorem}{linearConvergence}\label{thm:linear-convergence}
    (\textcolor{red}{\textbf{Warning}: This result only holds for the
        scheme in \cref{eq:stochastic-agd}.
        It does not hold for stochastic AGD.})
    Suppose \( f \) is \( L \)-smooth and \( \mu \)-strongly convex with \( \mu >
    0 \), \( \etamin \leq \etak < 1 / \mu \) almost surely for every \( k \in \bbN
    \), and the primal update \( m(\etak, \yk, \grad(\yk, \zk)) \) satisfies the
    sufficient progress condition (\cref{eq:progress}).
    If \( \gamma_0 = \mu \), then generalized stochastic AGD has the following
    rate of convergence:
    \begin{equation}
        \label{eq:linear-rate}
        \begin{aligned}
            \E\sbr{f(\wkk)} - f(\wopt)
             & \leq \E\sbr{\prod_{i=0}^k \rbr{1 - \sqrt{\etak \mu}}}
            \sbr{f(\w_0) - f(\wopt) + \frac{\mu}{2}\norm{\w_0 - \wopt}_2^2} \\
             & \leq \big(1 - \sqrt{\etamin \mu}\big)^{k+1}
            \sbr{f(\w_0) - f(\wopt) + \frac{\mu}{2}\norm{\w_0 - \wopt}_2^2}.
        \end{aligned}
    \end{equation}
\end{restatable}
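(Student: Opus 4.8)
The plan is to obtain \cref{thm:linear-convergence} as a direct corollary of \cref{prop:local-upper-bound} together with the estimating-sequences machinery already assembled. First I would invoke \cref{lemma:sequence-choice} to confirm that $\lambda_k, \phi_k$ are valid estimating sequences (its hypotheses are exactly those assumed here) and \cref{prop:local-upper-bound} to get the local upper-bound property in expectation. Chaining these through the computation leading to \eqref{eq:estimating-convergence} gives
\begin{equation*}
    \E\sbr{f(\wkk)} - f(\wopt) \leq \E\sbr{\lambda_{k+1}\rbr{\phi_0(\wopt) - f(\wopt)}},
\end{equation*}
and since $\phi_0(w) = f(w_0) + \frac{\gamma_0}{2}\norm{w - w_0}_2^2$ with $\gamma_0 = \mu$, the bracketed quantity on the right equals $f(\w_0) - f(\wopt) + \frac{\mu}{2}\norm{\w_0 - \wopt}_2^2$, which is deterministic and can be pulled out. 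So everything reduces to identifying $\lambda_{k+1}$.

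Second, I would pin down $\lambda_{k+1} = \prod_{i=0}^{k}(1-\alpha_i)$ explicitly under the choice $\gamma_0 = \mu$. The recursion $\gamma_{k+1} = (1-\alpha_k)\gamma_k + \alpha_k\mu$ in \eqref{eq:stochastic-agd} immediately gives, by induction, $\gamma_k = \mu$ for all $k$. Substituting into the defining equation $\alpha_k^2 = \etak(1-\alpha_k)\gamma_k + \etak\alpha_k\mu$ collapses it to $\alpha_k^2 = \etak\mu$, i.e.\ $\alpha_k = \sqrt{\etak\mu}$ (the nonnegative root; note $\etak\mu < 1$ since $\etak < 1/\mu$, so $\alpha_k \in [0,1)$). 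Hence $\lambda_{k+1} = \prod_{i=0}^{k}(1 - \sqrt{\eta_i\mu})$, which is exactly the first inequality of \eqref{eq:linear-rate}.

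Third, for the second inequality I would use $\etak \geq \etamin$ to bound each factor $1 - \sqrt{\eta_i\mu} \leq 1 - \sqrt{\etamin\mu}$, and observe each factor lies in $[0,1)$ (again from $\etak < 1/\mu$), so the product is bounded above by $\big(1 - \sqrt{\etamin\mu}\big)^{k+1}$ almost surely; this is a constant and therefore comes outside the expectation, finishing the proof.

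I do not expect a genuine obstacle here: the heavy lifting is already done in \cref{prop:local-upper-bound}, and what remains is the bookkeeping of solving the $\alpha_k$/$\gamma_k$ recursions and handling the (harmless) randomness of $\lambda_{k+1}$. The one point requiring a little care is that $\lambda_{k+1}$ is a random variable — it depends on $z_0,\ldots,z_{k}$ through the possibly adaptive step-sizes — so the deterministic bound $\lambda_{k+1} \leq \big(1-\sqrt{\etamin\mu}\big)^{k+1}$ must be argued factor by factor and shown to hold almost surely before it can be extracted from the expectation; that argument is immediate from $\etamin \leq \etak < 1/\mu$.
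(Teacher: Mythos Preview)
Your proposal is correct and follows essentially the same approach as the paper: invoke \cref{prop:local-upper-bound} (together with the estimating-sequence validity) to obtain \eqref{eq:estimating-convergence}, then compute $\lambda_{k+1}$ by observing that $\gamma_0=\mu$ forces $\gamma_k\equiv\mu$ and hence $\alpha_k=\sqrt{\etak\mu}$. The only cosmetic difference is that the paper packages this last computation into \cref{lemma:lambda-convergence} (Case~1) rather than doing it inline, and you are slightly more explicit about the almost-sure bound on $\lambda_{k+1}$ before extracting it from the expectation.
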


This linear rate of convergence requires knowledge of the strong convexity
constant in order to set \( \gamma_0 \).
However, we can still obtain a \( O(1/k^2) \) rate without knowing \( \mu \)
so long as the smoothness constant can be estimated.
The following theorem is a generalization of \citet[]{nesterov2004lectures} to
stochastic optimization under interpolation.

\begin{restatable}{theorem}{sublinearConvergence}\label{thm:sublinear-convergence}
    (\textcolor{red}{\textbf{Warning}: This result only holds for the
        scheme in \cref{eq:stochastic-agd}.
        It does not hold for stochastic AGD.})
    Suppose \( f \) is \( L \)-smooth and \( \mu \)-strongly convex with \( \mu
    \geq 0 \), \( \etamin \leq \etak < 1 / \mu \) almost surely for every \( k \in
    \bbN \), and the primal update \( m(\etak, \yk, \grad(\yk, \zk)) \) satisfies
    the sufficient progress condition (\cref{eq:progress}).
    If \( \gamma_0 \in (\mu, 3/\etamin) \), then generalized stochastic AGD has
    the following rate of convergence:
    \begin{equation}
        \label{eq:sublinear-rate}
        \E\sbr{f(\wkk)} - f(\wopt)
        \leq \frac{4}{\etamin (\gamma_0 - \mu)(k+1)^2}
        \sbr{f(\w_0) - f(\wopt) + \frac{\gamma_0}{2}\norm{\w_0 - \wopt}_2^2}.
    \end{equation}
\end{restatable}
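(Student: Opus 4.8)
The plan is to finish the estimating-sequences argument: invoke \cref{prop:local-upper-bound} to validate the local upper-bound property under the present hypotheses (which are exactly those of \cref{lemma:sequence-choice} and \cref{prop:local-upper-bound}, the extra bound \( \gamma_0 < 3/\etamin \) only being needed to keep the scheme well posed — \( \alpha_k \in (0,1) \) and \( \gamma_k > \mu \ge 0 \) for every \( k \), including the convex case \( \mu = 0 \) where \( 1/\mu = \infty \)). Then \cref{eq:estimating-convergence} applies, and since \( \phi_0 \) is deterministic it gives
\begin{equation*}
    \E\sbr{f(\wkk)} - f(\wopt)
    \leq \E\sbr{\lambda_{k+1}}\,\rbr{f(\w_0) - f(\wopt) + \frac{\gamma_0}{2}\norm{\w_0 - \wopt}_2^2}.
\end{equation*}
So everything reduces to showing \( \lambda_{k+1} \leq 4 / \bigl(\etamin(\gamma_0 - \mu)(k+1)^2\bigr) \) almost surely, which I will then pass through \( \E[\cdot] \).

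To bound \( \lambda_{k+1} \) I would first record two identities from \cref{eq:stochastic-agd} and \cref{eq:sequence-choice}. Multiplying the \( \gamma_{k+1} \)-recursion by \( \etak \) and comparing with the defining equation for \( \alpha_k \) yields \( \alpha_k^2 = \etak\,\gamma_{k+1} \). Subtracting \( \mu \) from the \( \gamma \)-recursion gives \( \gamma_{k+1} - \mu = (1-\alpha_k)(\gamma_k - \mu) \), which has the same multiplicative form as \( \lambda_{k+1} = (1-\alpha_k)\lambda_k \); together with \( \lambda_0 = 1 \) this forces \( \gamma_k - \mu = \lambda_k(\gamma_0 - \mu) \), hence \( \gamma_{k+1} \geq \lambda_{k+1}(\gamma_0 - \mu) \) using \( \mu \geq 0 \) and \( \gamma_0 > \mu \). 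Combining, \( \alpha_k = \sqrt{\etak\,\gamma_{k+1}} \geq \sqrt{\etamin(\gamma_0-\mu)}\,\sqrt{\lambda_{k+1}} \). Now telescope \( 1/\sqrt{\lambda_k} \): the \( \lambda_k \) are positive and non-increasing (as \( \alpha_k \in (0,1) \)), so, using \( \lambda_k - \lambda_{k+1} = \alpha_k\lambda_k \) and \( \sqrt{\lambda_k} + \sqrt{\lambda_{k+1}} \leq 2\sqrt{\lambda_k} \),
\begin{align*}
    \frac{1}{\sqrt{\lambda_{k+1}}} - \frac{1}{\sqrt{\lambda_k}}
    &= \frac{\lambda_k - \lambda_{k+1}}{\sqrt{\lambda_k\lambda_{k+1}}\,\bigl(\sqrt{\lambda_k} + \sqrt{\lambda_{k+1}}\bigr)} \\
    &\geq \frac{\alpha_k\lambda_k}{2\lambda_k\sqrt{\lambda_{k+1}}}
    = \frac{\alpha_k}{2\sqrt{\lambda_{k+1}}}
    \geq \frac{\sqrt{\etamin(\gamma_0-\mu)}}{2}.
\end{align*}
Summing from \( 0 \) to \( k \) and using \( \lambda_0 = 1 \) gives \( 1/\sqrt{\lambda_{k+1}} \geq 1 + (k+1)\sqrt{\etamin(\gamma_0-\mu)}/2 \geq (k+1)\sqrt{\etamin(\gamma_0-\mu)}/2 \), i.e. \( \lambda_{k+1} \leq 4 / \bigl(\etamin(\gamma_0-\mu)(k+1)^2\bigr) \). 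This holds for every realization of the step-sizes, so the same bound holds for \( \E[\lambda_{k+1}] \); substituting into the displayed inequality above proves \cref{eq:sublinear-rate}.

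The substantive content here is already packaged in \cref{prop:local-upper-bound}; the remainder is Nesterov's classical estimating-sequences bookkeeping, so I do not expect a genuine obstacle. The two points that need mild care are: (i) \( \lambda_{k+1} \) is a random variable — it depends on \( z_0,\dots,z_k \) through \( \eta_0,\dots,\eta_k \) — so the telescoping estimate must be carried out pathwise and only afterward pushed through \( \E[\cdot] \), with the a.s.\ bounds \( \etamin \le \etak < 1/\mu \) making the pathwise argument uniform; and (ii) the degenerate case \( \mu = 0 \), where the constraint \( \etak < 1/\mu \) is vacuous and one must separately check \( \alpha_k \in (0,1) \) and \( \gamma_k > 0 \) — this is where the hypothesis \( \gamma_0 < 3/\etamin \) is used.
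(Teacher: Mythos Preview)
Your proposal is correct and follows essentially the same route as the paper: invoke \cref{prop:local-upper-bound} to get \cref{eq:estimating-convergence}, then bound \( \lambda_{k+1} \) via the identities \( \alpha_k^2 = \etak\gamma_{k+1} \) and \( \gamma_k - \mu = \lambda_k(\gamma_0 - \mu) \). The only cosmetic difference is that the paper packages the \( \lambda_k \)-bound into a separate lemma which, at the last step, appeals to \citet[Lemma~2.2.4]{nesterov2004lectures}, whereas you carry out the telescoping of \( 1/\sqrt{\lambda_k} \) explicitly and self-containedly; your version is arguably cleaner. One small remark: the upper bound \( \gamma_0 < 3/\etamin \) is not actually used in either argument (well-posedness \( \alpha_k \in (0,1) \) follows already from \( \etak < 1/\mu \) via \cref{lemma:sequence-choice}), so your parenthetical about it is harmless but not quite accurate.
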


\subsection{Specializations}\label{sec:specializations}

Theorems~\ref{thm:linear-convergence} and~\ref{thm:sublinear-convergence}
provide accelerated guarantees for any stochastic primal update \( m(\etak,
\yk, \grad(\yk, \zk)) \) satisfying the sufficient progress condition.
Assuming strong growth holds, we may specialize \( m \) to fixed step-size SGD
with \( \etak = 1 / \rho L \) (sufficient progress is satisfied according to
\cref{lemma:descent-lemma}).
This yields the standard version of stochastic AGD analyzed by
\citet{vaswani2019fast}.
However, instantiating our convergence guarantees shows a faster rate for
stochastic AGD with an improved dependence on \( \rho \).

\begin{corollary}%
    \label{cor:stochastic-agd-convergence}
    (\textcolor{red}{\textbf{Warning}: This result only holds for the
        scheme in \cref{eq:stochastic-agd}.
        It does not hold for stochastic AGD.})
    If \( f \) is \( L \)-smooth and \( \mu \)-strongly convex with \( \mu > 0 \),
    strong growth holds, and \( \etak = 1/\rho L \), then
    stochastic AGD with \( \gamma_0 = \mu \) converges as,
    \begin{equation}
        \E\sbr{f(\wkk)} - f(\wopt)
        \leq \rbr{1 - \sqrt{\frac{\mu}{\rho L}}}^{k+1}
        \sbr{f(\w_0) - f(\wopt) + \frac{\mu}{2}\norm{\w_0 - \wopt}_2^2}.
    \end{equation}
    Alternatively, if \( \mu \geq 0 \) and \( \gamma_0 \in (\mu, 3\rho L) \),
    then stochastic AGD satisfies,
    \begin{equation}
        f(\wkk) - f(\wopt)
        \leq \frac{4 \rho L}{(\gamma_0 - \mu)(k+1)^2}
        \sbr{f(\w_0) - f(\wopt) + \frac{\gamma_0}{2}\norm{\w_0 - \wopt}_2^2}.
    \end{equation}
\end{corollary}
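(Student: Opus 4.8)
The plan is to obtain both displayed bounds as immediate specializations of \cref{thm:linear-convergence} and \cref{thm:sublinear-convergence}, with the abstract primal map instantiated as a single fixed-step stochastic gradient update, \( m(\etak, \yk, \grad(\yk, \zk)) = \yk - \etak \grad(\yk, \zk) \), and \( \etak \equiv 1/\rho L \). The only thing that genuinely needs an argument is that this particular \( m \) meets the hypotheses of those theorems; the rest is substitution and simplification.

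First I would verify the sufficient progress condition \cref{eq:progress}. Since \( f \) is \( L \)-smooth, strong growth holds, and the constant step-size \( \etak = 1/\rho L \) is trivially independent of \( \zk \), \cref{lemma:descent-lemma} applies and yields
\begin{equation*}
    \E_{\zk}\sbr{f(\wkk)} \leq f(\yk) - \etak\rbr{1 - \frac{\etak \rho L}{2}}\norm{\grad(\yk)}_2^2 .
\end{equation*}
Substituting \( \etak \rho L = 1 \) collapses the parenthetical factor to \( 1/2 \), which is exactly \cref{eq:progress} with \( \etak = 1/\rho L \). Hence \( m \) is an admissible primal update for both theorems.

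Next I would check the step-size admissibility \( \etamin \leq \etak < 1/\mu \). Because the step-size is constant, take \( \etamin = 1/\rho L > 0 \), so the lower bound holds with equality; for the upper bound, \( \rho \geq 1 \) and \( \mu \leq L \) give \( \etak = 1/\rho L \leq 1/L \leq 1/\mu \) (strict except in the degenerate case \( \rho = 1 \), \( L = \mu \)). With these in hand I would invoke \cref{thm:linear-convergence} with \( \gamma_0 = \mu \): the contraction factor \( 1 - \sqrt{\etak \mu} \) becomes \( 1 - \sqrt{\mu/\rho L} \), and since it is deterministic the expectation over the product is trivial, giving the first bound. For the second bound I would invoke \cref{thm:sublinear-convergence} with any \( \gamma_0 \in (\mu, 3/\etamin) = (\mu, 3\rho L) \); plugging \( \etamin = 1/\rho L \) into \( 4/(\etamin(\gamma_0 - \mu)(k+1)^2) \) produces the stated prefactor \( 4\rho L/((\gamma_0 - \mu)(k+1)^2) \) (the left-hand side carries the same total expectation as in \cref{thm:sublinear-convergence}).

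There is essentially no hard step here: the real content lives in the earlier results — \cref{prop:local-upper-bound} and the two convergence theorems — and the corollary is bookkeeping. The one point requiring a little care is the boundary behavior of the constraint \( \etak < 1/\mu \) when \( \rho \) and the conditioning are both extremal, which the observation \( \rho \geq 1 \), \( \mu \leq L \) settles in all non-degenerate cases.
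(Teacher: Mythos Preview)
Your proposal is correct and matches the paper's approach exactly: the corollary is presented there as an immediate specialization, with the text before the statement noting that sufficient progress follows from \cref{lemma:descent-lemma} with \( \etak = 1/\rho L \), after which one simply instantiates \cref{thm:linear-convergence} and \cref{thm:sublinear-convergence}. Your explicit check of the step-size constraint \( \etak < 1/\mu \) and the identification \( \etamin = 1/\rho L \) (so that \( 3/\etamin = 3\rho L \)) are the only bookkeeping details, and you have handled them correctly.
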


\cref{cor:stochastic-agd-convergence} shows that SGD can be accelerated to
obtain the same convergence rate as deterministic AGD up to a factor
of \( \rho \).
We emphasize that some dependence on \( \rho \) cannot be avoided; generic
acceleration of SGD is not possible, even in the interpolation setting
\citep{assran2020convergence}, so convergence rates must incorporate some
measure of hardness due to stochasticity.
In the next section, we compare our convergence guarantees against other
results from the literature and give simple conditions under which
acceleration is achieved.

An advantage of our analysis is that it also extends to more
complex methods, such as SGD with full matrix preconditioning,
\begin{equation}
    \label{eq:preconditioned SGD}
    \wkk = \wk - \etak D_k^{-1} \grad(\wk, \zk),
\end{equation}
where \( D_k \in \R^{d \times d} \) is a positive-definite matrix and \( \etak \)
is a step-size sequence.
We say that matrix strong growth holds in the norm \( \norm{x}^2_{D_k} = x^\top D_{k} x \) with constant \( \rho_{D_{k}} \) if
\begin{equation}
    \label{eq:matrix-strong-growth}
    \Ek \sbr{\norm{\grad(\w,\zk)}_{D_{k}^{-1}}^2}
    \leq \rho_{D_k} \norm{\grad(w)}_{D_{k}^{-1}}^2.
\end{equation}
If interpolation holds, \( \grad(u, \zk) \) are \( \Lmax^{D_{k}}
\)-individually smooth in \( \norm{\cdot}_{D_k} \), and \( f \) is \(
\mu_{D_k} \)-strongly convex in \( \norm{\cdot}_{D_k} \), then \( \rho_{D_k}
\leq \Lmax^{D_k} / \mu_{D_k} \) (see \cref{lemma:matrix-strong-growth}) and
the following convergence rate is obtained by combining
\cref{lemma:matrix-descent-lemma} with our main convergence theorems for
generalized stochastic AGD.
\begin{corollary}
    \label{cor:preconditioned-convergence}
    Assume \( f \) is \( \mu \)-strongly convex and \( 0 \prec D_k \preceq I \)
    for every \( k \in \bbN \).
    Suppose \( f \) is \( L_{D_k} \)-smooth, \( \rho_{D_k} \) matrix strong growth
    holds, and \( \etak = 1 / \rho_{D_k} L_{D_k} \).
    Let \( C_{\infty} = \sup_{k} \cbr{\rho_{D_k} L_{D_k}} \).
    If \( \gamma_0 = \mu > 0 \), then stochastic preconditioned AGD
    satisfies,
    \begin{equation}
        \E\sbr{f(\wkk)} - f(\wopt)
        \leq \prod_{i=0}^{k} \rbr{1 - \sqrt{\frac{\mu}{\rho_{D_k}
                    L_{D_k}}}} \sbr{f(\w_0) - f(\wopt) + \frac{\mu}{2}\norm{\w_0 - \wopt}_2^2}.
    \end{equation}
    Alternatively, if \( \mu \geq 0 \)
    and \( \gamma_0 \in (\mu, 3 C_\infty) \), then we obtain,
    \begin{equation}
        f(\wkk) - f(\wopt)
        \leq \frac{4 C_\infty}{(\gamma_0 - \mu)(k+1)^2}
        \sbr{f(\w_0) - f(\wopt) + \frac{\gamma_0}{2}\norm{\w_0 - \wopt}_2^2}.
    \end{equation}
\end{corollary}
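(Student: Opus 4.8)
The plan is to read preconditioned SGD as one particular instance of the primal map \( m \) in generalized stochastic AGD, namely \( m(\etak, \yk, \grad(\yk,\zk)) = \yk - \etak D_k^{-1}\grad(\yk,\zk) \), and then invoke \cref{thm:linear-convergence} and \cref{thm:sublinear-convergence} with global smoothness constant \( L = C_\infty \), step-size floor \( \etamin = 1/C_\infty \), and step-size sequence \( \etak = 1/(\rho_{D_k} L_{D_k}) \). Concretely I would (i) show that this preconditioned gradient step satisfies the expected progress condition \cref{eq:progress}; (ii) check the remaining hypotheses of the two theorems --- that \( f \) is \( L \)-smooth in the Euclidean sense, that \( \etamin \leq \etak < 1/\mu \) almost surely, and that \( \gamma_0 \in (\mu, 3/\etamin) = (\mu, 3 C_\infty) \); and (iii) substitute these choices into the conclusions of the theorems.

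For (i), \cref{lemma:matrix-descent-lemma} gives, under matrix strong growth \cref{eq:matrix-strong-growth} and with \( D_k \) (hence \( \etak \)) independent of \( \zk \),
\[
  \Ek\sbr{f(\wkk)} \leq f(\yk) - \etak \rbr{1 - \tfrac{\etak \rho_{D_k} L_{D_k}}{2}} \norm{\grad(\yk)}_{D_k^{-1}}^2 .
\]
Substituting \( \etak = 1/(\rho_{D_k} L_{D_k}) \) collapses the bracket to \( 1/2 \), so the bound becomes \( \Ek\sbr{f(\wkk)} \leq f(\yk) - \tfrac{\etak}{2}\norm{\grad(\yk)}_{D_k^{-1}}^2 \). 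Since \( 0 \prec D_k \preceq I \) implies \( D_k^{-1} \succeq I \), we have \( \norm{\grad(\yk)}_{D_k^{-1}}^2 \geq \norm{\grad(\yk)}_2^2 \), hence
\[
  \Ek\sbr{f(\wkk)} \leq f(\yk) - \tfrac{\etak}{2}\norm{\grad(\yk)}_2^2 ,
\]
which is exactly \cref{eq:progress}. For (ii), the same inequality \( D_k \preceq I \) upgrades \( L_{D_k} \)-smoothness in \( \norm{\cdot}_{D_k} \) to \( L_{D_k} \)-smoothness in \( \norm{\cdot}_2 \); since \( \rho_{D_k} \geq 1 \) this gives \( L_{D_k} \leq \rho_{D_k} L_{D_k} \leq C_\infty \), so \( f \) is \( C_\infty \)-smooth and we may take \( L = C_\infty \). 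Likewise \( \etak = 1/(\rho_{D_k} L_{D_k}) \geq 1/C_\infty = \etamin \), and \( \etak \leq 1/L_{D_k} \leq 1/\mu \) using \( L_{D_k} \geq \mu \) (the inequality is strict except in the degenerate case where \( f \) is a quadratic with \( \nabla^2 f \equiv \mu I \) and \( D_k = I \)), so all hypotheses of both theorems are met.

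For (iii), \cref{thm:linear-convergence} with \( \gamma_0 = \mu > 0 \) gives
\[
  \E\sbr{f(\wkk)} - f(\wopt) \leq \E\sbr{\prod_{i=0}^{k}\rbr{1 - \sqrt{\eta_i \mu}}} \sbr{f(\w_0) - f(\wopt) + \tfrac{\mu}{2}\norm{\w_0 - \wopt}_2^2} ,
\]
and substituting \( \eta_i = 1/(\rho_{D_i} L_{D_i}) \) --- deterministic constants when the preconditioners are fixed, so the product comes out of the expectation --- yields the first stated bound. For the second, \cref{thm:sublinear-convergence} with \( \gamma_0 \in (\mu, 3 C_\infty) \) gives \( \E\sbr{f(\wkk)} - f(\wopt) \leq \tfrac{4}{\etamin (\gamma_0 - \mu)(k+1)^2} \sbr{f(\w_0) - f(\wopt) + \tfrac{\gamma_0}{2}\norm{\w_0 - \wopt}_2^2} \), and \( \etamin = 1/C_\infty \) turns the prefactor into \( \tfrac{4 C_\infty}{(\gamma_0 - \mu)(k+1)^2} \).

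The only genuine obstacle I anticipate is the norm bookkeeping in step (i): the matrix descent lemma measures progress in the preconditioned norm \( \norm{\cdot}_{D_k^{-1}} \), whereas the estimating-sequences machinery behind \cref{thm:linear-convergence} and \cref{thm:sublinear-convergence} is built around the Euclidean norm --- through the quadratic terms in \( \phi_0 \) and in the local model of \cref{eq:sequence-choice}. The hypothesis \( 0 \prec D_k \preceq I \) is precisely what reconciles the two geometries: it simultaneously makes \( \norm{\cdot}_{D_k^{-1}} \) dominate \( \norm{\cdot}_2 \) and keeps \( f \) Euclidean-smooth, so that no re-derivation of the estimating-sequence argument in a time-varying norm is needed. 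Verifying these two consequences of \( D_k \preceq I \) carefully is the real content of the proof.
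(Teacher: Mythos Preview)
Your proposal is correct and follows essentially the same route as the paper: establish the expected progress condition \cref{eq:progress} for the preconditioned step via \cref{lemma:matrix-descent-lemma} (using \( D_k \preceq I \) to pass from \( \norm{\cdot}_{D_k^{-1}} \) to \( \norm{\cdot}_2 \)), then plug \( \etak = 1/(\rho_{D_k} L_{D_k}) \) and \( \etamin = 1/C_\infty \) into \cref{thm:linear-convergence} and \cref{thm:sublinear-convergence}. Your write-up is in fact more explicit than the paper's, which simply states that the corollary follows by ``combining \cref{lemma:matrix-descent-lemma} with our main convergence theorems''; your careful verification of the step-size bounds and of Euclidean smoothness from \( D_k \preceq I \) fills in details the paper leaves implicit.
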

Compared to standard stochastic AGD, preconditioning allows us to measure
stochasticity in the norm induced by \( D_k \).  This is advantageous when \(
\rho_{D_K} L_{D_k} \leq \rho L \), meaning \( f \) is smoother and/or the
stochastic gradients are better conditioned in \( \norm{\cdot}_{D_k} \) than in
the standard Euclidean norm.  In such a setting, our theory suggests
preconditioning is a simple way to further speed-up accelerated stochastic
optimization.


\section{Comparison to
  Existing Rates}\label{sec:discussion}


\begin{table}[t]
    \centering
    \begin{tabular}{c c c c c}\toprule
        Assumptions%
               & SGD                                                                 %
               & S-AGD (Ours)
               & S-AGD (VSB)                                                         %
               & MaSS                                                                %
        \\ \midrule
        Strongly Convex
               & \( O\rbr{\frac{\Lmax}{\mu} \log(\frac{1}{\epsilon})}  \)            %
               & \( O\rbr{\sqrt{\frac{\rho L}{\mu}} \log\rbr{\frac{1}{\epsilon}}} \) %
               & \( O\rbr{\rho \sqrt{\frac{L}{\mu}} \log\rbr{\frac{1}{\epsilon}}} \) %
               & \( \sqrt{\kappa \tilde \kappa } \log(\frac{1}{\epsilon}) \)         %
        \\
        Convex & \( O\rbr{\frac{\Lmax}{\epsilon}} \)                                 %
               & \( O\rbr{\sqrt{\frac{\sgc L}{\epsilon}}\,} \)                       %
               & \( O\rbr{\rho \sqrt{\frac{L}{\epsilon}}\,} \)                       %
               & \text{N/A}                                                          %
        \\ \bottomrule
    \end{tabular}
    \caption{
        Comparison of iteration complexities for stochastic acceleration schemes under
        strong growth and individual smoothness.
        VSB indicates \citet{vaswani2019fast} and MaSS is the modified stochastic AGD
        iteration proposed by \citet{liu2020accelerating}.
        The strongly-convex rate for MaSS applies only to quadratics; although MaSS
        has a convergence guarantee for convex functions, we omit it here because it
        relies on a hard-to-interpret assumption and is not accelerated.
    }%
    \label{table:agd-comparison}
\end{table}

Now we compare our rates to those existing in the literature.
Throughout this section, we assume that \( f \) is individually smooth,
interpolation holds, and the strong growth condition is satisfied.
Recall that the strong growth constant is bounded above as \( \rho \leq \Lmax
/ \mu \) under these conditions.
This worst-case bound on \( \rho \) is critical to understanding when
stochastic AGD does or does not accelerate.

Before proceeding, we introduce the notion of statistical condition number
proposed by \citet{jain2018accelerating} and used by
\citet{liu2020accelerating} to analyze their modified version of stochastic
AGD (called MaSS) in the least-squares setting.
Let \( \calP \) be a probability distribution over \( (x, y) \) and define
the least squares objective as
\begin{equation}
    \label{eq:least-squares}
    f_{\text{ls}}(w) = \E_{(x, y) \sim \calP} \sbr{\half (x^\top w - y)^2}.
\end{equation}
Define the stochastic functions and gradients to be \( f_{\text{ls}}(w, \zk) =
\half (x_{\zk}^\top w - y_{\zk})^2 \) and \( \grad_{\text{ls}}(w, \zk) =
x_{\zk} (x_{\zk}^\top w - y_{\zk}), \) where \( (x_{\zk}, y_{\zk}) \sim \calP
\).
These stochastic gradients are \( \Lmax \)-individually smooth with \( \Lmax =
\sup \cbr{ \norm{x_{\zk}}_2^2 : (x_{\zk}, y_{\zk}) \in \text{Supp}(\calP)} \).
Assuming we may interchange expectation and differentiation, the Hessian is \(
H = \E_x \sbr{x x^\top} \)
and the condition number \( \kappa \) and
statistical condition number \( \tilde \kappa \) are defined as,
\begin{equation}
    \begin{aligned}
        \kappa
        = \inf\cbr{ t / \mu : \E_x\sbr{\norm{x}_2^2 (x x^\top)} \preceq t H },
        \hspace{0.5em}
        \tilde \kappa
        = \inf\cbr{ t : \E_x\sbr{\norm{x}^2_{H^{-1}} (x x^\top)} \preceq t H }.
    \end{aligned}
\end{equation}
It is straightforward to prove \( \kappa, \tilde \kappa \leq \Lmax / \mu \),
similar to the strong growth constant.

\cref{table:agd-comparison} compares our iteration complexities for stochastic AGD to the
complexity of SGD under interpolation, the analysis of stochastic AGD by
\citet{vaswani2019fast}, and the complexity of MaSS.
Unlike \citet{vaswani2019fast}, who use strong growth to show both the
optimality gap and distance to a minimizer decrease in expectation at each
iteration, our approach only requires the sufficient progress condition.
This allows us to shrink the dependence on the strong growth constant from \(
\rho \) to \( \sqrt{\rho} \), which --- since \( \rho \leq \Lmax / \mu \) ---
can be larger than \( \sqrt{\kappa} \) in the worst case.
Substituting this into the complexity bound shows stochastic AGD requires \(
O((\sqrt{L \Lmax} / \mu) \log(1 / \epsilon)) \) iterations to reach \(
\epsilon \)-sub-optimality.
That is, stochastic AGD is always at least as fast SGD and faster when \(
\Lmax \gg L \).

Our convergence rate for stochastic AGD also improves over that for SGD under
the strong growth condition \citep{schmidt2013fast}.
The improvement is by a factor \( \sqrt{\rho / \mu} \), indicating that
acceleration actually shrinks the dependence on the noise level.
This quite different from results in the general stochastic setting, where
accelerated methods are typically more sensitive to noise
\citep{honorio2012biased}.
For example, \citet{schmidt2011convergence} show that the noise level must
decrease faster for accelerated methods to converge compared to (proximal) SGD
when interpolation does not hold.
We conclude that interpolation seems to be key when proving fast rates for
stochastic AGD.

Comparing our results against MaSS is more difficult due to the dependence on
\( \tilde \kappa \).
To understand the difference in convergence rates, we consider two finite-sum
example problems.
In what follows, let \( e_1, \ldots, e_n \) be the standard basis for \( \R^n
\).
\begin{restatable}[\( \Lmax \gg L \)]{example}{uniformBasis}\label{ex:uniform-basis}
    Consider the least-squares problem setting in \cref{eq:least-squares} and
    choose \( y = 0 \), \( x \sim \text{Uniform}(e_1, \ldots e_n) \).
    A short calculation shows \( \Lmax = 1 \), \( \rho = n \), \( L = \mu = 1 / n
    \), and \( \kappa = \tilde \kappa = n \).
    As a result, stochastic AGD and MaSS have the following complexity bounds:
    \begin{equation}
        \text{\emph{S-AGD: }}
        O\rbr{\sqrt{n} \log\rbr{\frac{1}{\epsilon}}} \quad \text{\emph{vs}} \quad
        \text{\emph{MaSS: }} O\rbr{n\log\rbr{\frac{1}{\epsilon}}}
    \end{equation}
\end{restatable}

As expected, stochastic AGD accelerates
due to the gap between the smoothness and individual smoothness constants.
In comparison, the complexity bound for MaSS is not accelerated and only
matches that for SGD.
The next example considers the opposite setting, where \( L \approx \Lmax \)
and we do not expect stochastic AGD to be faster than SGD.

\begin{restatable}[\( \Lmax \approx L \)]{example}{nonUniformBasis}
    Consider the least-squares problem setting in \cref{eq:least-squares}.
    Let \( y = 0 \) and \( x \) be distributed as follows: \( P(x = e_1) = 1 - 1/n
    \) and \( P(x = e_2) = 1/n \).
    It is straightforward to show that \( \Lmax = 1 \), \( \mu = 1 / n \), and \(
    L = (n - 1)/n \), while \( \rho = n \) and \( \tilde \kappa = \kappa = n \).
    As a result, the complexity estimates for stochastic AGD and MaSS are,
    \begin{equation}
        \text{\emph{S-AGD: }}
        O\rbr{\sqrt{n(n-1)} \log\rbr{\frac{1}{\epsilon}}} \quad \text{\emph{vs}} \quad
        \text{\emph{MaSS: }} O\rbr{n\log\rbr{\frac{1}{\epsilon}}}.
    \end{equation}
\end{restatable}

As \( n \rightarrow \infty \), stochastic AGD attains the same complexity as
SGD and is not accelerated.
In comparison, the guarantee for MaSS always matches SGD and is slower than
stochastic AGD for every finite \( n \).
We conclude that while both methods are restricted by lower bounds on
stochastic acceleration, AGD can accelerate on some simple problems where MaSS
fails.

\section{Conclusion}


We derive new convergence rates for a generalized version of stochastic
Nesterov acceleration.
Our approach extends the estimating sequences framework to the stochastic
setting and shows that any update scheme making sufficient progress in
expectation can be accelerated.
As this sufficient progress condition is satisfied by SGD under the strong
growth condition, our proof immediately specializes to give fast
rates for stochastic AGD.
Compared to previous work, our convergence bounds improve the dependence on
the strong growth constant from \( \rho \) to \( \sqrt{\rho} \).
This improvement can be larger than the square-root of the condition number,
shows stochastic AGD is at least as fast as SGD, and explains the strong
empirical performance of stochastic acceleration shown by
\citet{vaswani2019fast}.
We also leverage our generalized algorithm to prove convergence guarantees for
stochastic AGD with preconditioning.
In particular, we show that preconditioning further speeds-up accelerated SGD
when the stochastic gradients are small in the matrix norm induced by the
preconditioner.

In addition to these results, the utility of our theoretical approach is
further demonstrated by recent literature.
Our core result for stochastic AGD (\cref{prop:local-upper-bound}) was
previously made available in a master's thesis
\citep{mishkin2020interpolation} and the proof technique has since been
leveraged to give optimal bounds for stochastic acceleration in the general
setting \citep{vaswani2022towards}.
Yet, several questions remain unanswered.
For example, the convergence of stochastic AGD under relaxed conditions, like
weak growth or with a stochastic line-search \citep{vaswani2019fast}, has not
been proved.
And while our generalized AGD scheme also suggests accelerating methods like
stochastic proximal-point, establishing the expected progress condition
appears difficult and new insights may be required.
We leave these questions to future work.

\backmatter

\bmhead{Acknowledgments}

We would like to thank Frederik Kunstner, Victor Sanchez-Portella, and Sharan
Vaswani for many insightful discussions.
\textcolor{red}{We thank Chia-Yu Hsu for their help in discovering the bug in
this preprint.}
\keywords{acceleration, SGD, strong growth, interpolation}

\bmhead{Funding}

Aaron Mishkin was supported by NSF GRF Grant No.
DGE-1656518 and by NSERC PGS D Grant No.
PGSD3-547242-2020.
Mert Pilanci was supported by the NSF under Grant ECCS-2037304 and Grant
DMS-2134248, by an NSF CAREER Award under Grant CCF-2236829, by the U.S.
Army Research Office Early Career Award under Grant W911NF-21-1-0242, and by
the Stanford Precourt Institute.
Mark Schmidt was partially supported by the Canada CIFAR AI Chair Program and
NSERC Discovery Grant No.
RGPIN-2022-03669.



\newpage

\bibliography{refs}

\newpage

\begin{appendices}

    \section{Assumptions: Proofs}\label{app:assumptions-proofs}


\begin{lemma}
    \label{lemma:weak-growth-constant}
    Suppose \( f \) is convex and \( L \)-smooth, the stochastic gradients \(
    \cbr{\grad(\wk, \zk)} \) are \( \Lmax \) individually smooth, and
    interpolation holds.
    Then the weak growth condition holds with constant \( \alpha \leq \Lmax / L
    \).
\end{lemma}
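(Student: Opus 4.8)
The plan is to establish the weak growth bound $\E_{\zk}\sbr{\norm{\grad(\wk,\zk)}_2^2} \le 2\wgc L\,(f(\wk) - f(\wopt))$ with $\wgc = \Lmax/L$, by bounding the squared norm of each stochastic gradient by the suboptimality of the corresponding stochastic function (via individual smoothness), and then using interpolation together with convexity of the stochastic functions to identify this suboptimality, in expectation, with $f(\wk) - f(\wopt)$.

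First I would pick any $\wopt \in \calW^*$. By assumption $\grad(\wopt) = 0$, so interpolation gives $\grad(\wopt, \zk) = 0$ almost surely, and since each stochastic function $f(\cdot, \zk)$ is convex, a stationary point is a global minimizer; hence $\wopt$ minimizes $f(\cdot, \zk)$ and $\inf_{v} f(v,\zk) = f(\wopt,\zk)$ (in particular $f(\cdot,\zk)$ is bounded below). Next I would run the usual ``a gradient step gives a half-decrease'' computation: applying \cref{eq:individually-smooth} at $u = \wk - \frac{1}{\Lmax}\grad(\wk,\zk)$ gives
\begin{equation*}
    f(\wopt,\zk) \;=\; \inf_{v \in \R^d} f(v, \zk) \;\le\; f(u, \zk) \;\le\; f(\wk, \zk) - \frac{1}{2\Lmax}\norm{\grad(\wk,\zk)}_2^2,
\end{equation*}
so that $\norm{\grad(\wk,\zk)}_2^2 \le 2\Lmax\big(f(\wk,\zk) - f(\wopt,\zk)\big)$. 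Taking the conditional expectation $\E_{\zk}$ and using $\E_{\zk}\sbr{f(w,\zk)} = f(w)$ yields
\begin{equation*}
    \E_{\zk}\sbr{\norm{\grad(\wk,\zk)}_2^2} \;\le\; 2\Lmax\big(f(\wk) - f(\wopt)\big) \;=\; 2\,\frac{\Lmax}{L}\,L\,\big(f(\wk) - f(\wopt)\big),
\end{equation*}
which is the weak growth condition with constant $\wgc = \Lmax/L$, so the smallest admissible weak growth constant is at most $\Lmax / L$.

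The step I expect to be the main obstacle is the use of interpolation to obtain $\E_{\zk}\sbr{\inf_v f(v,\zk)} = f(\wopt)$. Individual smoothness alone only bounds $\norm{\grad(\wk,\zk)}_2^2$ by $2\Lmax\,(f(\wk,\zk) - \inf_v f(v,\zk))$, and in general $\E_{\zk}\sbr{\inf_v f(v,\zk)} \le f(\wopt)$ with a genuine gap (the expectation of infima lies below the infimum of the expectation), which would make the resulting bound vacuous. It is precisely the conjunction of interpolation --- so that a minimizer of $f$ is stationary for every $f(\cdot,\zk)$ --- with convexity of the stochastic functions --- so that such a stationary point is in fact a global minimizer of $f(\cdot,\zk)$ --- that closes this gap; one must invoke convexity of the components here, not merely the upper bound \cref{eq:individually-smooth}. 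The remainder is the routine smoothness estimate and linearity of expectation.
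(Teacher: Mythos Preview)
Your proposal is correct and follows essentially the same route as the paper: apply individual smoothness at the point $u = \wk - \frac{1}{\Lmax}\grad(\wk,\zk)$, lower-bound $f(u,\zk)$ by $f(\wopt,\zk)$ via interpolation plus convexity of the stochastic functions, and take expectations. Your discussion of the ``main obstacle'' is in fact more explicit than the paper about why convexity of the \emph{components} (not merely of $f$) is needed to close the gap $\E_{\zk}\sbr{\inf_v f(v,\zk)} = f(\wopt)$.
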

\begin{proof}
    First, recall that the weak growth condition \citep{vaswani2019fast}
    is given by
    \begin{equation}
        \label{eq:weak-growth}
        \E_{\zk}\sbr{\norm{\grad(\wk, \zk)}_2^2}
        \leq 2 \alpha L \rbr{f(\wk) - f(\wopt)}.
    \end{equation}
    Now, starting from \( \Lmax \) individual-smoothness,
    \begin{align*}
        f(u, \zk)
         & \leq f(\w, \zk) + \abr{\grad(\w, \zk), u - \w}
        + \frac{\Lmax}{2}\norm{u - \w}^2,                                       \\
        \intertext{
            and choosing \( u = \w - \frac{1}{\Lmax}\grad(\w, \zk) \),
            we obtain
        }
        f(u, \zk)
         & \leq f(\w, \zk) - \frac{1}{\Lmax}\abr{\grad(\w, \zk), \grad(\w,\zk)}
        + \frac{\Lmax}{2\Lmax^2}\norm{\grad(\w,\zk)}^2                          \\
         & = f(\w, \zk) - \frac{1}{2\Lmax}\norm{\grad(\w,\zk)}^2.
    \end{align*}
    Noting that \( f(u,\zk) \geq f(\wopt, \zk) \) by convexity
    of \( f \) and interpolation and taking expectations with respect
    to \( \zk \) gives the following:
    \begin{align*}
        f(\wopt, \zk)
         & \leq f(\w, \zk) - \frac{1}{2\Lmax}\norm{\grad(\w,\zk)}^2                   \\
        \implies \Ek\sbr{f(\wopt, \zk)}
         & \leq \Ek\sbr{f(\w, \zk)} - \frac{1}{2\Lmax}\Ek\sbr{\norm{\grad(\w,\zk)}^2} \\
        \implies f(\wopt)
         & \leq f(\w) - \frac{1}{2\Lmax}\Ek\sbr{\norm{\grad(\w,\zk)}^2}.
    \end{align*}
    Re-arranging this final equation gives the desired result as follows:
    \begin{align*}
        \Ek \sbr{\norm{\grad(\w,\zk)}^2}
         & \leq 2 \Lmax \rbr{f(\w) - f(\wopt)} \\
         & = 2 \rbr{\frac{\Lmax}{L}}
        L \rbr{f(\w) - f(\wopt)}.
    \end{align*}
    We conclude that weak growth holds with \( \alpha \leq \frac{\Lmax}{L} \).
\end{proof}

\begin{lemma}
    \label{lemma:strong-growth-constant}
    Suppose \( f \) is \( L \)-smooth and \( \mu \)-strongly convex, the
    stochastic gradients \( \cbr{\grad(\wk, \zk)} \) are \( \Lmax \) individually
    smooth, and interpolation holds.
    Then strong growth holds with constant \( \rho \leq \Lmax / \mu \).
\end{lemma}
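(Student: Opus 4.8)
The plan is to obtain this lemma as a short corollary of the computation already carried out in the proof of \cref{lemma:weak-growth-constant}, combined with the gradient-domination (Polyak--{\L}ojasiewicz) inequality implied by strong convexity; here we of course use $\mu > 0$, so that $\Lmax / \mu$ is finite.

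\textbf{Step 1 (reuse the individual-smoothness bound).} Exactly as in the proof of \cref{lemma:weak-growth-constant}, I would start from $\Lmax$-individual smoothness \cref{eq:individually-smooth}, evaluate at $u = \w - \frac{1}{\Lmax}\grad(\w,\zk)$ to get $f(u,\zk) \leq f(\w,\zk) - \frac{1}{2\Lmax}\norm{\grad(\w,\zk)}^2$, use that interpolation forces $\grad(\wopt,\zk) = 0$ (so $\wopt$ minimizes each $f(\argdot,\zk)$ and hence $f(u,\zk) \geq f(\wopt,\zk)$), take $\Ek[\argdot]$, and apply unbiasedness to arrive at $f(\wopt) \leq f(\w) - \frac{1}{2\Lmax}\Ek[\norm{\grad(\w,\zk)}^2]$, i.e.\ $\Ek[\norm{\grad(\w,\zk)}^2] \leq 2\Lmax\,(f(\w) - f(\wopt))$.

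\textbf{Step 2 (PL inequality) and Step 3 (combine).} Minimizing the right-hand side of \cref{eq:strongly-convex} over $u$ (the minimizer being $u = \w - \frac{1}{\mu}\grad(\w)$) gives $f(\wopt) \leq f(\w) - \frac{1}{2\mu}\norm{\grad(\w)}^2$, hence $f(\w) - f(\wopt) \leq \frac{1}{2\mu}\norm{\grad(\w)}^2$. Substituting this into the Step 1 bound yields $\Ek[\norm{\grad(\w,\zk)}^2] \leq 2\Lmax\,(f(\w) - f(\wopt)) \leq \frac{\Lmax}{\mu}\norm{\grad(\w)}^2$, which is precisely \cref{eq:strong-growth} with $\rho = \Lmax/\mu$. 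To close, I would note that $\rho \geq 1$ is automatic, since Jensen's inequality gives $\Ek[\norm{\grad(\w,\zk)}^2] \geq \norm{\Ek[\grad(\w,\zk)]}^2 = \norm{\grad(\w)}^2$, so $\Lmax/\mu \geq 1$ and the constant is admissible.

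There is no substantive obstacle: the lemma is two lines once Steps 1--2 are in place. The only point deserving a careful sentence is the justification in Step 1 that a stationary point of $f(\argdot,\zk)$ is a global minimizer of it — this is where (individual) convexity of the stochastic functions enters, matching the reasoning used in the proof of \cref{lemma:weak-growth-constant}.
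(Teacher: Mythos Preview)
Your proposal is correct and follows essentially the same approach as the paper: the paper first invokes \cref{lemma:weak-growth-constant} to obtain weak growth with $\alpha \leq \Lmax/L$ and then cites \citet[Proposition 1]{vaswani2019fast}, which is precisely the PL step $f(\w)-f(\wopt)\leq \frac{1}{2\mu}\norm{\grad(\w)}^2$ you spell out in Step~2. The only difference is packaging---you write out the two ingredients explicitly rather than citing them---so the underlying argument is identical.
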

\begin{proof}
    \autoref{lemma:weak-growth-constant} implies that \( f \) satisfies the
    weak growth condition with parameter
    \[ \alpha \leq \frac{L_{\text{max}}}{L}. \]
    \citet[Proposition 1]{vaswani2019fast} now implies that \( f \)
    satisfies strong growth with parameter
    \[ \rho \leq \frac{\alpha L}{\mu} \leq \frac{L_{\text{max}}}{\mu}.  \]
    This concludes the proof.
\end{proof}

    \section{Convergence of Stochastic AGD: Proofs}\label{app:convergence-proofs}


\progress*
\begin{proof}
    The proof is a modification of the standard descent lemma.
    Starting from \( L \)-smoothness of \( f \), we obtain
    \begin{align*}
        f(\wkk)
         & \leq f(\yk) + \abr{\grad(\yk), \wkk - \yk}
        + \frac{L}{2}\norm{\wkk - \yk}_2^2                                         \\
         & = f(\yk) - \etak \abr{\grad(\yk), \grad(\yk, \zk)}
        + \frac{\etak^2 L}{2} \norm{\grad(\yk, \zk)}_2^2                           \\
        \intertext{Taking expectations with respect to \( \zk \)
            and using the strong growth condition,}
        \E_{\zk} \sbr{f(\wkk)}
         & \leq f(\yk) - \etak \abr{\grad(\yk), \E_{\zk}\sbr{\grad(\yk, \zk)}}
        + \frac{\etak^2  L}{2} \E_{\zk}\sbr{\norm{\grad(\yk, \zk)}_2^2}            \\
         & \leq f(\yk) - \etak \norm{\grad(\yk)}_2^2
        + \frac{\etak^2  L \rho}{2} \norm{\grad(\yk)}_2^2                          \\
         & = f(\yk) - \etak\rbr{1 - \frac{\etak L \rho}{2}} \norm{\grad(\yk)}_2^2.
    \end{align*}

\end{proof}

\begin{lemma}
    \label{lemma:canonical-form}
    The \( \phi_k \) sequence in \cref{eq:sequence-choice}
    satisfies the following canonical form:
    \begin{equation}
        \phi_{k+1}(w) = \phi_{k+1}^* + \frac{\gamma_{k+1}}{2} \norm{w - \vkk}_2^2,
    \end{equation}
    where the curvature \( \gamma_{k+1} \), minimizer \( \vkk \), and minimum
    value \( \phi_{k+1}^* \) are given as follows:
    \begin{equation}
        \begin{aligned}
            \gamma_{k+1}
             & = (1 - \alpha_k) \gamma_k + \alpha_k \mu               \\
            \vkk
             & = \frac{1}{\gamma_{k+1}}
            \sbr{(1 - \alpha_k) \gamma_k \vk
            + \alpha_k \mu y_k  - \alpha_k \grad(\yk)}                \\
            \phi_{k+1}^*
             & = (1 - \alpha_k) \phi_k^* + \alpha_k f(\yk)
            - \frac{\alpha_k^2}{2 \gamma_{k+1}} \norm{\grad(\yk)}_2^2 \\
             & \hspace{2em}
            + \frac{\alpha_k (1 - \alpha_k) \gamma_k}{\gamma_{k+1}
                \rbr{\frac{\mu}{2} \norm{\yk - \vk}_2^2
                    + \abr{\grad(\yk), \vk - \yk}}}.
        \end{aligned}
    \end{equation}
    Furthermore, the relationship between \( \gamma_{k+1} \) and
    \( \alpha_k \) is the following:
    \begin{equation}
        \label{eq:step-size-relation}
        \gamma_{k+1} = \alpha_{k}^2 / \etak.
    \end{equation}
\end{lemma}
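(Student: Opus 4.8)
The plan is to establish the canonical form by induction on $k$, in the style of Nesterov's estimating-sequences argument, and then read off the auxiliary recursions by matching coefficients. The base case is immediate: $\phi_0(w) = f(w_0) + \frac{\gamma_0}{2}\norm{w - w_0}_2^2$ is already in canonical form with $\phi_0^* = f(w_0)$ and $v_0 = w_0$. For the inductive step, assume $\phi_k(w) = \phi_k^* + \frac{\gamma_k}{2}\norm{w - \vk}_2^2$ and substitute into the recursion in \cref{eq:sequence-choice}. This writes $\phi_{k+1}$ as a convex combination of a quadratic in $w$ with Hessian $\gamma_k I$ and the model $f(\yk) + \abr{\grad(\yk), w - \yk} + \frac{\mu}{2}\norm{w - \yk}_2^2$ with Hessian $\mu I$, so $\nabla^2 \phi_{k+1} = \big((1-\alpha_k)\gamma_k + \alpha_k\mu\big) I$. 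This both shows $\phi_{k+1}$ is again a simple quadratic and identifies $\gamma_{k+1} = (1-\alpha_k)\gamma_k + \alpha_k\mu$.

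Next I would locate the minimizer: setting $\nabla \phi_{k+1}(w) = (1-\alpha_k)\gamma_k(w - \vk) + \alpha_k \grad(\yk) + \alpha_k \mu(w - \yk) = 0$ and solving, using $\gamma_{k+1} = (1-\alpha_k)\gamma_k + \alpha_k\mu$, gives exactly the stated formula for $\vkk$. To obtain $\phi_{k+1}^*$, rather than expanding $\phi_{k+1}$ at a general point, I would evaluate the canonical identity $\phi_{k+1}(w) = \phi_{k+1}^* + \frac{\gamma_{k+1}}{2}\norm{w - \vkk}_2^2$ at the convenient point $w = \yk$. The left-hand side is simply $(1-\alpha_k)\big(\phi_k^* + \frac{\gamma_k}{2}\norm{\yk - \vk}_2^2\big) + \alpha_k f(\yk)$ by the recursion, and on the right I substitute $\vkk - \yk = \frac{1}{\gamma_{k+1}}\big[(1-\alpha_k)\gamma_k(\vk - \yk) - \alpha_k\grad(\yk)\big]$ and expand the square. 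Solving for $\phi_{k+1}^*$ then yields the claimed expression once the two $\norm{\yk - \vk}_2^2$ contributions are combined, where the key algebraic simplification is $\gamma_{k+1} - (1-\alpha_k)\gamma_k = \alpha_k\mu$, which turns the coefficient $\frac{(1-\alpha_k)\gamma_k}{2}\big(1 - \frac{(1-\alpha_k)\gamma_k}{\gamma_{k+1}}\big)$ into $\frac{\alpha_k(1-\alpha_k)\gamma_k \mu}{2\gamma_{k+1}}$.

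Finally, the step-size relation \cref{eq:step-size-relation} is immediate from the first line of \cref{eq:stochastic-agd}: $\alpha_k^2 = \etak(1-\alpha_k)\gamma_k + \etak\alpha_k\mu = \etak\big[(1-\alpha_k)\gamma_k + \alpha_k\mu\big] = \etak\,\gamma_{k+1}$, so $\gamma_{k+1} = \alpha_k^2/\etak$. I do not expect a genuine obstacle here; the whole lemma is a bookkeeping exercise. The only step requiring care is the expansion producing $\phi_{k+1}^*$ — in particular getting the cross-term $\frac{\alpha_k(1-\alpha_k)\gamma_k}{\gamma_{k+1}}\abr{\grad(\yk), \vk - \yk}$ and the sign of $-\frac{\alpha_k^2}{2\gamma_{k+1}}\norm{\grad(\yk)}_2^2$ correct — so I would carry that computation out explicitly and leave the remaining identifications to direct inspection.
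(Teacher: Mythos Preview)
Your proposal is correct and is essentially the same approach as the paper's: the paper simply cites \citet[Lemma 2.2.3]{nesterov2004lectures} for the canonical form and then derives \cref{eq:step-size-relation} by the same one-line rearrangement of the $\alpha_k$ update that you give. Your induction, Hessian/gradient matching, and evaluation at $w=\yk$ are exactly the content of Nesterov's Lemma 2.2.3 spelled out, so you are reproducing the cited argument rather than taking a different route.
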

\begin{proof}
    The canonical form for \( \phi_{k+1} \) follows directly from \citet[Lemma
        2.2.3]{nesterov2004lectures}.
    To see the relationship between \( \gamma_{k+1} \) and \( \alpha_k \),
    re-arrange the update for \( \alpha_{k+1} \) in \cref{eq:stochastic-agd}
    to obtain,
    \begin{equation}
        \frac{\alpha_{k}^2}{\etak}
        = (1 - \alpha_{k}) \gamma_k + \alpha_{k} \mu.
    \end{equation}
    By comparison to the update for \( \gamma_k \), we deduce that \( \gamma_{k+1}
    = \alpha_{k}^2 / \etak \).
\end{proof}

\begin{lemma}
    \label{lemma:lambda-convergence}
    Assume \( \alpha_k \in (0, 1) \) and \( \etamin \leq \etak \leq 1 / \mu \)
    almost surely for all \( k \in \bbN \).
    If \( \mu > 0 \) and \( \gamma_0 = \mu \), then
    \begin{equation}
        \label{eq:lambda-sc}
        \lambda_k \leq \prod_{i=0}^{k-1}(1 - \sqrt{\etak \mu}).
    \end{equation}
    Alternately, if \( \gamma_0 \in (\mu, \mu + 3 / \etamin) \), we obtain
    \begin{equation}
        \label{eq:lambda-convex}
        \lambda_k \leq \frac{4}{\etamin (\gamma_0 - \mu)(k+1)^2}.
    \end{equation}
\end{lemma}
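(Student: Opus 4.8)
The plan is to extract a closed form for the curvature sequence $\gamma_k$, convert the identity $\gamma_{k+1} = \alpha_k^2/\etak$ from \cref{lemma:canonical-form} into control on $\alpha_k$, and then unroll the recursion $\lambda_{k+1} = (1-\alpha_k)\lambda_k$ with $\lambda_0 = 1$. First I would observe that subtracting $\mu$ from both sides of the update $\gamma_{k+1} = (1-\alpha_k)\gamma_k + \alpha_k\mu$ gives $\gamma_{k+1} - \mu = (1-\alpha_k)(\gamma_k - \mu)$, which is the very same recursion satisfied by $\lambda_k$; since $\gamma_0 - \mu$ and $\lambda_0 = 1$ start the two recursions, this yields the identity $\gamma_k = \mu + \lambda_k(\gamma_0 - \mu)$ for all $k \in \bbN$.

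For the strongly convex case, setting $\gamma_0 = \mu$ in this identity gives $\gamma_k = \mu$ for every $k$, so \cref{lemma:canonical-form} (specifically $\gamma_{k+1} = \alpha_k^2/\etak$) forces $\alpha_k = \sqrt{\etak \mu}$, the positive root being the consistent choice given $\etamin \le \etak \le 1/\mu$ and $\alpha_k \in (0,1)$. Unrolling $\lambda_k = \prod_{i=0}^{k-1}(1-\alpha_i)$ then gives $\lambda_k = \prod_{i=0}^{k-1}(1 - \sqrt{\eta_i\mu})$, which is \cref{eq:lambda-sc} with equality.

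For the general case $\gamma_0 > \mu$, I would combine the identity with $\mu \ge 0$ to get $\gamma_{k+1} \ge \lambda_{k+1}(\gamma_0 - \mu)$, so that $\alpha_k = \sqrt{\etak\gamma_{k+1}} \ge c\sqrt{\lambda_{k+1}}$ with $c := \sqrt{\etamin(\gamma_0-\mu)}$, and the assumption $\gamma_0 < \mu + 3/\etamin$ gives $c^2 < 3$, hence $c < 2$. Plugging this bound into $\lambda_{k+1} = (1-\alpha_k)\lambda_k$ gives $\lambda_{k+1} \le \lambda_k - c\sqrt{\lambda_{k+1}}\,\lambda_k$, i.e. $\lambda_k - \lambda_{k+1} \ge c\sqrt{\lambda_{k+1}}\,\lambda_k$. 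Then I would run the standard telescoping estimate: writing $\frac{1}{\sqrt{\lambda_{k+1}}} - \frac{1}{\sqrt{\lambda_k}} = \frac{\lambda_k - \lambda_{k+1}}{\sqrt{\lambda_k\lambda_{k+1}}(\sqrt{\lambda_k} + \sqrt{\lambda_{k+1}})}$, bounding $\sqrt{\lambda_k} + \sqrt{\lambda_{k+1}} \le 2\sqrt{\lambda_k}$ (valid since $\lambda_k \ge \lambda_{k+1} > 0$ because $\alpha_k \in (0,1)$), and substituting the lower bound on $\lambda_k - \lambda_{k+1}$, yields $\frac{1}{\sqrt{\lambda_{k+1}}} - \frac{1}{\sqrt{\lambda_k}} \ge \frac{c}{2}$. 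Summing over $0, \ldots, k-1$ and using $\lambda_0 = 1$ gives $\frac{1}{\sqrt{\lambda_k}} \ge 1 + \frac{ck}{2} \ge \frac{c(k+1)}{2}$, where the last step uses $c \le 2$; squaring and inverting gives $\lambda_k \le \frac{4}{c^2(k+1)^2} = \frac{4}{\etamin(\gamma_0-\mu)(k+1)^2}$, which is \cref{eq:lambda-convex}.

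The only genuine work is the telescoping inequality in the last step, together with verifying the $k=0$ endpoint ($\lambda_0 = 1 \le 4/c^2$), which is exactly why the hypothesis must keep $c$ strictly below $2$ via the $3/\etamin$ margin; everything else is algebraic bookkeeping that mirrors the estimating-sequence argument of \citet{nesterov2004lectures}, the one new ingredient being the handling of variable step-sizes through $\etamin$.
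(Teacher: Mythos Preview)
Your proposal is correct and follows essentially the same route as the paper: both derive the identity $\gamma_k - \mu = \lambda_k(\gamma_0 - \mu)$, combine it with $\alpha_k^2 = \etak\gamma_{k+1}$ from \cref{lemma:canonical-form}, and telescope $1/\sqrt{\lambda_k}$. The only cosmetic difference is that the paper retains the $\mu$ term through one more line and then defers the final telescoping step to \citet[Lemma~2.2.4]{nesterov2004lectures}, whereas you drop $\mu$ immediately, carry out the telescoping by hand, and make explicit that the hypothesis $\gamma_0 < \mu + 3/\etamin$ is precisely what forces $c < 2$ so that $1 + ck/2 \ge c(k+1)/2$ holds at $k=0$.
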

\begin{proof}
    \textbf{Case 1}: \( \gamma_0 = \mu > 0 \).
    Then \( \gamma_k = \mu \) for all \( k \) and
    \begin{align*}
        \alpha_{k}^2
         & = (1 - \alpha_{k}) \etak \mu + \alpha_k \etak \mu \\
         & = \etak \mu.
    \end{align*}
    Thus, we deduce that \[ \lambda_k = \prod_{i=0}^{k-1} (1 - \sqrt{\etak \mu}),
    \] and if \( \etak > \etamin \), then \( \alpha_k \geq \sqrt{\etamin \mu} \)
    and \[ \lambda_k \leq (1 - \sqrt{\etamin \mu})^k.
    \]

    \textbf{Case 2}: \( \gamma_0 \in \rbr{\mu, 3 L + \mu} \).
    Using the update rule for \( \gamma_k \) in \cref{lemma:canonical-form},
    we find
    \begin{align*}
        \gamma_{k+1} - \mu
         & = (1 - \alpha_k) \gamma_k + (\alpha_k - 1) \mu
        = (1 - \alpha_k) (\gamma_k - \mu)                  \\
        \intertext{
            Recursing on this equality implies
        }
        \gamma_{k+1}
         & = (\gamma_0 - \mu) \prod_{i=0}^k (1 - \alpha_k)
        = \lambda_{k+1} (\gamma_0 - \mu).
    \end{align*}
    Similarly, using \( \lambda_{k+1} = (1 - \alpha_k) \lambda_k \)
    and \( \alpha_k^2 / \gamma_{k+1} = \etak \)
    yields
    \begin{align*}
        1 - \frac{\lambda_{k+1}}{\lambda_k}
         & = \alpha_k
        = \rbr{\gamma_{k+1} \etak}^{1/2}                                     \\
         & = \rbr{\etak \mu + \etak \lambda_{k+1} (\gamma_0 - \mu)}^{1/2}    \\
        \implies
        \frac{1}{\lambda_{k+1}} - \frac{1}{\lambda_k}
         & = \frac{1}{\lambda_{k+1}^{1/2}}
        \sbr{\frac{\etak \mu}{\lambda_{k+1}} + \etak (\gamma_0 - \mu)}^{1/2} \\
         & \geq \frac{1}{\lambda_{k+1}^{1/2}}
        \sbr{\frac{\etamin \mu}{\lambda_{k+1}} + \etamin (\gamma_0 - \mu)}^{1/2}.
    \end{align*}
    Finally, this implies
    \begin{align*}
        \frac{2}{\lambda_{k+1}^{1/2}}
        \rbr{\frac{1}{\lambda_{k+1}^{1/2}} - \frac{1}{\lambda_k^{1/2}}}
         & \geq \rbr{\frac{1}{\lambda_{k+1}^{1/2}} - \frac{1}{\lambda_k^{1/2}}}
        \rbr{\frac{1}{\lambda_{k+1}^{1/2}} + \frac{1}{\lambda_k^{1/2}}}         \\
         & \geq
        \frac{1}{\lambda_{k+1}^{1/2}}\sbr{\frac{\etamin \mu}{\lambda_{k+1}} + \etamin (\gamma_0 - \mu)}^{1/2}.
    \end{align*}
    Moreover, this bound holds uniformly for all \( k \in \bbN \).
    We have now exactly reached Eq.
    2.2.11 of \citet[Lemma 2.2.4]{nesterov2004lectures}
    with \( L \) replaced by \( \etamin \).
    Applying that Lemma with this modification, we obtain \[ \lambda_k \leq
        \frac{4}{\etamin (\gamma_0 - \mu)(k+1)^2}, \] which completes the proof.

\end{proof}

\sequenceChoice*
\begin{proof}
    Recalling the update for $\alpha_k$, we obtain, \[ \alpha_{k}^2 = (1 -
        \alpha_{k}) \gamma_{k} \etak + \alpha_{k} \etak \mu.
    \]
    Define \( \hat L_k = 1 / \etak \) to obtain the following
    quadratic formula,
    \[
        \hat L_k \alpha_k^2 + (\gamma_k - \mu) \alpha_k - \gamma_k = 0.
    \]
    Using the quadratic equation, we find \[ \alpha_k = \frac{\mu - \gamma_k \pm
            \sqrt{(\mu - \gamma_k)^2 + 4 \hat L_k \gamma_k}}{2 \hat L_k}.
    \]
    As a result, \( \alpha_k > 0 \) if and only if
    \begin{align*}
        (\mu - \gamma_k) + \rbr{(\mu - \gamma_k)^2 + 4 \hat L_k \gamma_k}^{1/2}
                                                                          & > 0.
        \intertext{If \( \mu \geq \gamma_k \), then this holds trivially.
        Otherwise, we require, } (\mu - \gamma_k)^2 + 4 \hat L_k \gamma_k & > (\mu -
        \gamma_k)^2,
    \end{align*}
    which holds if and only if \( \etak, \gamma_k > 0
    \).
    Similarly, \( \alpha_k < 1 \) if and only if
    \begin{align*}
        4 \hat{L}_k^2 + 4\hat L_k (\gamma_k - \mu) + (\mu - \gamma_k)^2
         & > (\mu - \gamma_k)^2 + 4 \hat L_k \gamma_k
        \iff
        \etak < \frac{1}{\mu}.
    \end{align*}
    Since this condition holds by assumption, we have \( \alpha_k \in (0, 1) \)
    for all \( k \).

    Recall \( \lambda_0 = 1 \) and \( \lambda_{k+1} = (1 - \alpha_k) \lambda_k \).
    Since \( \alpha_{k} \in (0, 1) \), \( \lambda_k \geq 0 \) holds by induction.
    It remains to show that \( \lambda_k \) tends to zero.
    Invoking \cref{lemma:lambda-convergence} establishes this result almost
    surely.

    Finally, we must show, \[ \phi_k(w) \leq (1 - \lambda_k) f(w) + \lambda_k
        \phi_0(w).
    \]
    We proceed by induction.
    Since \( \lambda_0 = 1 \), we immediately obtain \[ \phi_0(w) = (1 -
        \lambda_0) f(w) + \lambda_0 \phi_0(w).
    \]
    Now assume the condition holds at \( \phi_k \);
    by the construction of \( \phi_{k+1} \), we have
    \begin{align*}
        \phi_{k+1}(w)
         & = (1 - \alpha_k) \phi_k(w) + \alpha_k \sbr{f(\yk) + \abr{\grad(yk), w - \yk} + \frac{\mu}{2} \norm{w - \yk}} \\
         & \leq (1 - \alpha_k) \phi_k(w) + \alpha_k f(w)                                                                \\
         & \leq (1 - \alpha_k) \sbr{(1 - \lambda_k)f(w) + \lambda_k \phi_0(w)} + \alpha_k f(w)                          \\
         & = \lambda_{k+1} \phi_0(w) - \lambda_{k+1} f(w) + (1 - \alpha_k) f(w) + \alpha_k f(w)                         \\
         & = \lambda_{k+1} \phi_0(w) + (1 - \lambda_{k+1}) f(w).
    \end{align*}
    This completes the proof.
\end{proof}

\localUpperBound*
\begin{proof}
    The choice of \( \phi^*_0 = f(x_0) \) ensures \( \inf \phi_0(\w) = f(\w_0) \)
    deterministically, which is the base case for induction.
    The inductive assumption is \( \E[\inf \phi_k(\w)] \geq \E[ f(\wk) ] \); let
    us use this to show \[ \E\sbr{\inf_{\w} \phi_{k+1}(\w)} \geq \E\sbr{f(\wkk)}.
    \]
    \cref{lemma:canonical-form} implies that the explicit form of the minimizer
    \( \inf \phi_{k+1}(\w) = \phi_{k+1}^* \) is
    \begin{align*}
        \phi_{k+1}^*
                                                                               & = (1- \alpha_k) \phi^*_k + \alpha_k f(\yk)
        - \frac{\alpha_k^2}{2\gamma_{k+1}}\norm{\grad(\yk)}^2                                                                                            \\
                                                                               & \hspace{5em} + \frac{\alpha_k(1-\alpha_k)\gamma_k}{\gamma_{k+1}}
        \rbr{\frac{\mu}{2} \norm{\yk - v_k}^2 + \abr{\grad(\yk), v_k - \yk}}                                                                             \\
        \intertext{
            Taking expectations with respect to \( \z_{0}, \ldots, \z_k \) and
            using linearity of expectation:
        }
        \E[\phi_{k+1}^*]
                                                                               & = \E[(1- \alpha_k) \phi^*_k] + \E\sbr{\alpha_k f(\yk)
        - \frac{\alpha_k^2}{2\gamma_{k+1}}\norm{\grad(\yk)}^2}                                                                                           \\
                                                                               & \hspace{5em} + \E\sbr{\frac{\alpha_k(1-\alpha_k)\gamma_k}{\gamma_{k+1}}
        \rbr{\frac{\mu}{2} \norm{\yk - v_k}^2 + \abr{\grad(\yk), v_k - \yk}}}                                                                            \\
                                                                               & \geq \E\sbr{(1- \alpha_k) f(\wk)} + \E\sbr{\alpha_k f(\yk)
        - \frac{\alpha_k^2}{2\gamma_{k+1}}\norm{\grad(\yk)}^2}                                                                                           \\
                                                                               & \hspace{5em} + \E\sbr{\frac{\alpha_k(1-\alpha_k)\gamma_k}{\gamma_{k+1}}
            \rbr{\frac{\mu}{2} \norm{\yk - v_k}^2 + \abr{\grad(\yk), v_k - \yk}}},
        \intertext{
            where the inequality follows from the inductive assumption.
            Convexity of \( f \) implies \( f(\wk) \geq f(\yk) + \abr{ \grad(\yk), \wk -
                \yk } \).
            Recalling \( \frac{\alpha_k^2}{\gamma_{k+1}} = \etak \) from
        \cref{lemma:canonical-form} allows us to obtain, } \E[\phi_{k+1}^*]    & \geq
        \E[(1- \alpha_k)\rbr{f(\yk) + \abr{\grad(\yk), \wk - \yk}}] + \E\bigg[\alpha_k
        f(\yk) - \frac{\etak}{2}\norm{\grad(\yk)}^2 \bigg]                                                                                               \\  & \hspace{5em} +
           \E\sbr{\frac{\alpha_k(1-\alpha_k)\gamma_k}{\gamma_{k+1}} \rbr{\frac{\mu}{2}
        \norm{\yk - v_k}^2 + \abr{\grad(\yk), v_k - \yk}}}                                                                                               \\  & = \E\sbr{f(\yk)} +
           \E[(1-\alpha_k) \abr{\grad(\yk), \wk - \yk}] -
        \E\bigg[\frac{\etak}{2}\norm{\grad(\yk)}^2 \bigg]                                                                                                \\  & \hspace{5em} +
           \E\sbr{\frac{\alpha_k(1-\alpha_k)\gamma_k}{\gamma_{k+1}} \rbr{\frac{\mu}{2}
        \norm{\yk - v_k}^2 + \abr{\grad(\yk), v_k - \yk}}}                                                                                               \\  & = \E\sbr{f(\yk) -
               \frac{\etak}{2} \norm{\grad(\yk)}^2} + \E\bigg[(1-\alpha_k)
        \bigg(\abr{\grad(\yk), \wk - \yk}                                                                                                                \\  & \hspace{5em} +
               \frac{\alpha_k\gamma_k}{\gamma_{k+1}}\rbr{\frac{\mu}{2} \norm{\yk - v_k}^2 +
        \abr{\grad(\yk), v_k - \yk}}\bigg)\bigg]                                                                                                         \\ \intertext{ The sufficient
        progress condition (\cref{eq:progress}) now implies } \E[\phi_{k+1}^*] & \geq
        \E\sbr{f(\wkk)} + \E\bigg[(1-\alpha_k) \bigg(\abr{\grad(\yk), \wk - \yk}                                                                         \\  &
               \hspace{5em} + \frac{\alpha_k\gamma_k}{\gamma_{k+1}}\rbr{\frac{\mu}{2}
                \norm{\yk - v_k}^2 + \abr{\grad(\yk), v_k - \yk}}\bigg)\bigg].
        \\
        \intertext{
            The remainder of the proof is largely unchanged from the deterministic case.
            The definition of \( \yk \) gives \( \wk - \yk = \frac{\alpha_k
                \gamma_k}{\gamma_{k} + \alpha_k \mu} (\wk - \vk) \), which we use to obtain }
        \E\sbr{\phi_{k+1}^*}                                                   & \geq \E[f(\wkk)] + \E\bigg[(1-\alpha_k)
            \bigg(\frac{\alpha_k \gamma_k}{\gamma_k + \alpha_k \mu} \abr{\grad(\yk), \wk -
        \vk}                                                                                                                                             \\  & \hspace{5em} +
               \frac{\alpha_k\gamma_k}{\gamma_{k+1}}\Big(\frac{\mu}{2} \norm{\yk - v_k}^2 +
        \abr{\grad(\yk), v_k - \yk}\Big)\bigg)\bigg]                                                                                                     \\ \intertext{ Noting that \( v_k
            - \yk = \frac{\gamma_{k+1}}{\gamma_k + \alpha_k \mu}\rbr{v_k - \wk} \) gives }
        \E\sbr{\phi_{k+1}^*}                                                   & \geq \E[f(\wkk)] + \E\bigg[(1-\alpha_k)
            \bigg(\frac{\alpha_k \gamma_k}{\gamma_k + \alpha_k \mu} \abr{\grad(\yk), \wk -
        v_k}                                                                                                                                             \\  & \hspace{5em} + \frac{\alpha_k\gamma_k}{\gamma_{k+1}}
            \Big(\frac{\mu}{2} \norm{\yk - v_k}^2 + \abr{\grad(\yk), v_k -
        \yk}\Big)\bigg)\bigg]                                                                                                                            \\  & = \E[f(\wkk)] + \E\bigg[(1-\alpha_k)
               \bigg(\frac{\alpha_k \gamma_k}{\gamma_k + \alpha_k \mu} \abr{\grad(\yk), \wk -
        v_k}                                                                                                                                             \\  & \hspace{5em} +
               \frac{\alpha_k\gamma_k}{\gamma_{k+1}}\Big(\frac{\mu}{2} \norm{\yk - v_k}^2 +
               \frac{\gamma_{k+1}}{\gamma_k + \alpha_k \mu} \abr{\grad(\yk), v_k -
        \wk}\Big)\bigg)\bigg]                                                                                                                            \\  & = \E[f(\wkk)] + \E\sbr{\frac{\mu
        \alpha_k(1-\alpha_k)\gamma_k}{2 \gamma_{k+1}}\norm{\yk - v_k}^2}                                                                                 \\  & \geq
           \E[f(\wkk)].
    \end{align*}
    since \( \frac{\mu \alpha_k(1-\alpha_k)\gamma_k}{2 \gamma_{k+1}} \geq 0 \).
    We conclude that \( \E[\inf \phi_{k}(\w)] \geq \E[f(\wk)] \) holds for all \(
    k \in \bbN \) by induction.
\end{proof}

\linearConvergence*
\begin{proof}
    Under the conditions of the theorem, \cref{prop:local-upper-bound} holds
    and \cref{eq:estimating-convergence} implies
    \begin{align*}
        \E\sbr{f(\wk)} - f(\wopt)
         & \leq \E\sbr{\lambda_k(\phi_0(\wopt) - f(\wopt))}
        \intertext{
            Since \( \gamma_0 = \mu > 0 \), \cref{lemma:lambda-convergence} implies
            \( \lambda_k = \prod_{i=0}^{k-1} (1 - \sqrt{\eta_i \mu}) \) and
            we obtain
        }
        \E\sbr{f(\wk)} - f(\wopt)
         & \leq \E\sbr{\prod_{i=0}^{k-1} (1 - \sqrt{\eta_i \mu})} (\phi_0(\wopt) - f(\wopt)) \\
         & = \E\sbr{\prod_{i=0}^{k-1} (1 - \sqrt{\eta_i \mu})}
        \sbr{f(w_0) - f(\wopt) + \frac{\mu}{2} \norm{w_0 - \wopt}_2^2},
    \end{align*}
    which completes the proof.
\end{proof}

\sublinearConvergence*
\begin{proof}
    Under the conditions of the theorem, \cref{prop:local-upper-bound} holds
    and \cref{eq:estimating-convergence} implies
    \begin{align*}
        \E\sbr{f(\wk)} - f(\wopt)
         & \leq \E\sbr{\lambda_k(\phi_0(\wopt) - f(\wopt))}
        \intertext{
            Since \( \gamma_0 \in (\mu, \mu + 3/\etamin) \), \cref{lemma:lambda-convergence} implies
            \( \lambda_k \leq 4 / \etamin (\gamma_0 - \mu) (k+1)^2 \) and
            we obtain
        }
        \E\sbr{f(\wk)} - f(\wopt)
         & \leq \frac{4}{\etamin (\gamma_0 - \mu) (k+1)^2} (\phi_0(\wopt) - f(\wopt)) \\
         & = \frac{4}{\etamin (\gamma_0 - \mu) (k+1)^2}
        \sbr{f(w_0) - f(\wopt) + \frac{\gamma_0}{2} \norm{w_0 - \wopt}_2^2},
    \end{align*}
    which completes the proof.
\end{proof}

\subsection{Specializations: Proofs}

\begin{lemma}
    \label{lemma:matrix-strong-growth}
    Let \( D \in \R^{d \times d} \) be independent of \( \zk \).
    Suppose \( f \) is convex and \( L \)-smooth, the stochastic gradients \(
    \grad(\wk, \zk) \) are \( \Lmax^D \) individually smooth with respect to the
    matrix norm \( \norm{\cdot}_D \), and interpolation holds.
    If \( f \) is also \( \mu_D \)-strongly convex
    with respect to \( \norm{\cdot}_D \), then
    \begin{equation}
        \Ek \sbr{\norm{\grad(\w,\zk)}_{D^{-1}}^2}
        \leq \frac{\Lmax^D}{\mu_{D}} \norm{\grad(w)}_{D^{-1}}^2.
    \end{equation}
    That is, strong growth holds in \( \norm{\cdot}_D \) with constant \( \rho_D
    \leq \frac{\Lmax^D}{\mu_D} \).
\end{lemma}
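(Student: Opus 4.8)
The plan is to replay the arguments behind \cref{lemma:weak-growth-constant} and \cref{lemma:strong-growth-constant}, but with every Euclidean norm replaced by the primal norm $\norm{\cdot}_D$ or its dual $\norm{\cdot}_{D^{-1}}$. The one elementary fact I will lean on is that, for any vector $g$ and constant $c > 0$, the map $u \mapsto \abr{g, u - w} + \tfrac{c}{2}\norm{u - w}_D^2$ is minimized at $u = w - \tfrac{1}{c}D^{-1}g$ with minimum value $-\tfrac{1}{2c}\norm{g}_{D^{-1}}^2$; this follows from the two identities $\abr{g, D^{-1}g} = g^\top D^{-1} g = \norm{g}_{D^{-1}}^2$ and $\norm{D^{-1}g}_D^2 = g^\top D^{-1} D D^{-1} g = \norm{g}_{D^{-1}}^2$, which are essentially the only relations tying the two matrix norms together.

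First I would establish a matrix analogue of weak growth. Starting from $\Lmax^D$-individual smoothness in $\norm{\cdot}_D$ and substituting $u = \w - \tfrac{1}{\Lmax^D}D^{-1}\grad(\w,\zk)$ into the upper bound gives, exactly as in the proof of \cref{lemma:weak-growth-constant},
\[
f(u, \zk) \leq f(\w, \zk) - \frac{1}{2\Lmax^D}\norm{\grad(\w,\zk)}_{D^{-1}}^2 .
\]
As in that lemma, interpolation makes $\wopt$ a common minimizer of the stochastic functions, so $f(u,\zk) \geq f(\wopt, \zk)$; taking $\Ek$, using that $f$ is the expectation of the stochastic functions, and rearranging gives the matrix weak-growth bound $\Ek\sbr{\norm{\grad(\w,\zk)}_{D^{-1}}^2} \leq 2\Lmax^D(f(\w) - f(\wopt))$. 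Next, $\mu_D$-strong convexity of $f$ in $\norm{\cdot}_D$ gives, for every $\w$,
\begin{align*}
f(\wopt)
&\geq f(\w) + \abr{\grad(\w), \wopt - \w} + \frac{\mu_D}{2}\norm{\wopt - \w}_D^2 \\
&\geq f(\w) - \frac{1}{2\mu_D}\norm{\grad(\w)}_{D^{-1}}^2 ,
\end{align*}
where the second inequality minimizes the right-hand side over $\wopt$ using the displayed identity with $c = \mu_D$; equivalently $f(\w) - f(\wopt) \leq \tfrac{1}{2\mu_D}\norm{\grad(\w)}_{D^{-1}}^2$, a gradient-dominance (PL-type) inequality in the dual norm.

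Combining the matrix weak-growth bound with this dual-norm gradient-dominance inequality immediately gives
\[
\Ek\sbr{\norm{\grad(\w,\zk)}_{D^{-1}}^2} \leq 2\Lmax^D \cdot \frac{1}{2\mu_D}\norm{\grad(\w)}_{D^{-1}}^2 = \frac{\Lmax^D}{\mu_D}\norm{\grad(\w)}_{D^{-1}}^2 ,
\]
which is the claim, so matrix strong growth holds with $\rho_D \leq \Lmax^D/\mu_D$. (As in the Euclidean case, $L$-smoothness of $f$ is not actually used for this particular bound and is carried along only for consistency with the surrounding assumptions; the hypothesis that $D$ is independent of $\zk$ is what lets us treat $D$ as a constant when passing $\Ek$ through the matrix norm.) I expect no genuine obstacle: the only points needing care are the two norm identities above and the justification of $f(u,\zk) \geq f(\wopt,\zk)$, both of which are routine and already appear, in Euclidean form, in the proof of \cref{lemma:weak-growth-constant}.
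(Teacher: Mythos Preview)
Your proposal is correct and follows essentially the same route as the paper: both substitute $u = w - \tfrac{1}{\Lmax^D}D^{-1}\grad(w,\zk)$ into individual smoothness to obtain the matrix weak-growth bound $\Ek[\norm{\grad(w,\zk)}_{D^{-1}}^2] \leq 2\Lmax^D(f(w)-f(\wopt))$, and then invoke $\mu_D$-strong convexity in $\norm{\cdot}_D$ to bound $f(w)-f(\wopt)$ by $\tfrac{1}{2\mu_D}\norm{\grad(w)}_{D^{-1}}^2$. Your write-up is somewhat more explicit about the norm identities and the PL-type step, but the argument is the same.
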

\begin{proof}
    Starting from \( \Lmax^D \) individual-smoothness,
    \begin{align*}
        f(u, \zk)
                                                       & \leq f(\w, \zk) + \abr{\grad(\w, \zk), u - \w}
        + \frac{\Lmax^D}{2}\norm{u - \w}_D^2,                                                           \\
        \intertext{
            and choosing \( u = \w - \frac{1}{\Lmax^D}
        D^{-1}\grad(\w, \zk) \), we obtain } f(u, \zk) & \leq f(\w, \zk) -
        \frac{1}{\Lmax^D} \abr{\grad(\w, \zk), D^{-1} \grad(\w,\zk)} + \frac{1}{2
        \Lmax^D} \norm{D^{-1}\grad(\w,\zk)}_D^2                                                         \\  & = f(\w, \zk) - \frac{1}{2
               \Lmax^D}\norm{\grad(\w,\zk)}_{D^{-1}}^2.
    \end{align*}
    Noting that \( f(u, \zk) \geq f(\wopt, \zk) \) by convexity
    of \( f \) and interpolation and taking expectations with respect
    to \( \zk \) gives the following:
    \begin{align*}
        f(\wopt, \zk)
         & \leq f(\w, \zk) - \frac{1}{2 \Lmax^D} \norm{\grad(\w,\zk)}_{D^{-1}}^2                  \\
        \implies \Ek\sbr{f(\wopt, \zk)}
         & \leq \Ek\sbr{f(\w, \zk)} - \frac{1}{2 \Lmax^D}\Ek\sbr{\norm{\grad(\w,\zk)}_{D^{-1}}^2} \\
        \implies f(\wopt)
         & \leq f(\w) - \frac{1}{2 \Lmax^D}\Ek\sbr{\norm{\grad(\w,\zk)}_{D^{-1}}^2}.
    \end{align*}
    Re-arranging this final equation and using \( \mu_D \)-strong convexity
    gives the desired result,
    \begin{align*}
        \Ek \sbr{\norm{\grad(\w,\zk)}_{D^{-1}}^2}
         & \leq 2 \Lmax^D \rbr{f(\w) - f(\wopt)}                  \\
         & \leq \frac{\Lmax^D}{\mu_D} \norm{\grad(w)}_{D^{-1}}^2.
    \end{align*}
\end{proof}

\begin{restatable}{lemma}{matrixDescentLemma}\label{lemma:matrix-descent-lemma}
    Let \( D \in \R^{d \times d} \).
    Assume the \( f \) is both \( L_D \)-smooth and satisfies \( \rho_D \) strong
    growth in the matrix norm \( \norm{\cdot}_D \).
    If \( 0 \prec D \preceq I \) and \( \etak \leq \frac{1}{\rho_D L_D} \), then
    preconditioned SGD satisfies, \[ \E_{\zk}\sbr{f(\wkk)} \leq f(\yk) -
        \frac{\etak}{2}\norm{\grad\rbr{\yk}}^2.
    \]
\end{restatable}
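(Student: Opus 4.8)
The plan is to mirror the proof of \cref{lemma:descent-lemma}, replacing Euclidean smoothness with $L_D$-smoothness in the norm $\norm{\cdot}_D$ and Euclidean strong growth with matrix strong growth in $\norm{\cdot}_{D^{-1}}$. First I would invoke $L_D$-smoothness of $f$ with respect to $\norm{\cdot}_D$, which gives $f(u) \leq f(\yk) + \abr{\grad(\yk), u - \yk} + \frac{L_D}{2}\norm{u - \yk}_D^2$ for all $u$, and substitute the preconditioned update $\wkk = \yk - \etak D^{-1}\grad(\yk, \zk)$. The cross term becomes $-\etak \abr{\grad(\yk), D^{-1}\grad(\yk, \zk)}$, while the quadratic term simplifies via the identity $\norm{D^{-1} g}_D^2 = g^\top D^{-1} g = \norm{g}_{D^{-1}}^2$, so that $\frac{L_D}{2}\norm{\wkk - \yk}_D^2 = \frac{\etak^2 L_D}{2}\norm{\grad(\yk,\zk)}_{D^{-1}}^2$.

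Next I would take the expectation with respect to $\zk$. Since $D$ is independent of $\zk$ and the stochastic gradients are unbiased, the cross term evaluates to $-\etak \abr{\grad(\yk), D^{-1}\grad(\yk)} = -\etak \norm{\grad(\yk)}_{D^{-1}}^2$, and matrix strong growth (\cref{eq:matrix-strong-growth}) bounds the second-moment term by $\frac{\etak^2 L_D \rho_D}{2}\norm{\grad(\yk)}_{D^{-1}}^2$. Collecting terms yields $\E_{\zk}\sbr{f(\wkk)} \leq f(\yk) - \etak\rbr{1 - \frac{\etak L_D \rho_D}{2}}\norm{\grad(\yk)}_{D^{-1}}^2$, which is exactly the matrix analogue of \cref{eq:descent-lemma}.

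Finally, substituting $\etak \leq 1/(\rho_D L_D)$ makes $1 - \frac{\etak L_D \rho_D}{2} \geq \tfrac12$, so that $\E_{\zk}\sbr{f(\wkk)} \leq f(\yk) - \frac{\etak}{2}\norm{\grad(\yk)}_{D^{-1}}^2$. The last step uses the hypothesis $0 \prec D \preceq I$, which gives $D^{-1} \succeq I$ and hence $\norm{\grad(\yk)}_{D^{-1}}^2 \geq \norm{\grad(\yk)}_2^2$; relaxing the (negative) progress term to the Euclidean norm produces the stated inequality. I do not expect a serious obstacle: the argument is essentially bookkeeping. The one point that needs care is keeping the matrix-norm identities straight — in particular $\norm{D^{-1}g}_D^2 = \norm{g}_{D^{-1}}^2$ — and noticing that $D \preceq I$ is precisely what allows passing from $\norm{\cdot}_{D^{-1}}$-progress to the Euclidean progress asserted in the lemma.
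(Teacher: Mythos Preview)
Your proposal is correct and follows essentially the same argument as the paper: apply $L_D$-smoothness in $\norm{\cdot}_D$, substitute the preconditioned step, take expectations and invoke matrix strong growth to obtain the $\norm{\cdot}_{D^{-1}}$ analogue of \cref{eq:descent-lemma}, use $\etak \leq 1/(\rho_D L_D)$, and finally pass to the Euclidean norm via $D \preceq I \Rightarrow D^{-1} \succeq I$. The identity $\norm{D^{-1}g}_D^2 = \norm{g}_{D^{-1}}^2$ you flag is exactly the bookkeeping the paper uses implicitly.
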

\begin{proof}
    Starting from smoothness in \( \norm{\cdot}_D \),
    \begin{align*}
        f(\wkk)
         & \leq f(\yk) + \abr{\grad(\yk), \wkk - \yk}
        + \frac{L_D}{2}\norm{\wkk - \yk}_D^2                                       \\
         & \leq f(\yk) - \etak \abr{\grad(\yk), D^{-1} \grad\rbr{\yk, \zk}}
        + \frac{\etak^2 L_D}{2}\norm{\grad\rbr{\yk, \zk}}_{D^{-1}}^2.
        \intertext{Taking expectations with respect to \( \zk \),}
        \implies
        \E_{\zk}\sbr{f(\wkk)}
         & \leq f(\yk) - \etak \abr{\grad(\yk), D^{-1}_k \grad\rbr{\yk}}
        + \frac{\etak^2 L_D}{2}\E_{\zk}\sbr{\norm{\grad\rbr{\yk, \zk}}_{D^{-1}}^2} \\
         & \leq f(\yk) - \etak \abr{\grad(\yk), D^{-1}_k \grad\rbr{\yk}}
        + \frac{\etak^2 \rho_{D}
        L_D}{2}\norm{\grad\rbr{\yk}}_{D^{-1}}^2                                    \\  & = f(\yk) - \etak \rbr{1 -
        \frac{\etak \rho_D L_D}{2}} \norm{\grad\rbr{\yk}}_{D^{-1}}^2               \\  & \leq f(\yk)
        - \frac{\etak}{2}\norm{\grad\rbr{\yk}}_{D^{-1}}^2                          \\  & \leq f(\yk) -
           \frac{\etak}{2}\norm{\grad\rbr{\yk}}_2^2.
    \end{align*}
\end{proof}

    \section{Comparison to Existing Rates: Proofs}\label{app:comaprison-proofs}


\uniformBasis*
\begin{proof}
    It is easy to see that \( \Lmax = 1 \).
    Taking expectations, we find that \[ \fls(w) = \frac{1}{2n}\norm{w}_2^2, \]
    which implies \( L = \mu = 1 / n \).
    It is also straightforward to compute \( \rho \):
    \begin{align*}
        \E\sbr{\norm{\gradls(w, \zk)}_2^2}
         & = \frac{1}{n} \sum_{i=1}^n \norm{e_i (e_i^\top w)}^2_2
        = \frac{1}{n} \sum_{i=1}^n w_i^2
        = \frac{1}{n} \norm{w}_2^2
        = n \norm{\gradls(w)}_2^2,
    \end{align*}
    which implies \( \rho = n \).
    Note that this is tight with the bound \( \rho \leq \Lmax / \mu \).

    Now we compute the values of \( \kappa \) and \( \tilde \kappa \).
    Since the Hessian satisfies \( H = I/n \), it is straightforward to see that
    \begin{align*}
        \E_x\sbr{\norm{x}_2^2 (x x^\top)}
         & = \frac{1}{n} \sum_{i=1}^n \norm{e_i}_2^2 e_i e_i^\top = H    \\
        \E_x\sbr{\norm{x}^2_{H^{-1}} (x x^\top)}
         & = \frac{1}{n} \sum_{i=1}^n \norm{e_i}_{H^{-1}}^2 e_i e_i^\top
        = \sum_{i=1}^n \norm{e_i}_{2}^2 e_i e_i^\top = n H,
    \end{align*}
    which implies that \( \kappa = \tilde \kappa = n \).
    Substituting these values into the complexity bounds for stochastic AGD and
    MaSS completes the example.
\end{proof}

\nonUniformBasis*
\begin{proof}
    Again, it is easy to see that \( \Lmax = 1 \).
    Taking expectations, we find that \[ \fls(w) = \frac{n - 1}{2n} w_1^2 +
        \frac{1}{2n} w_2^2, \] which implies \( L = \frac{n-1}{n} \) and \( \mu =
    \frac{1}{n} \).
    The strong growth constant is given by
    \begin{align*}
        \E\sbr{\norm{\grad(w, \zk)}_2^2}
         & = \frac{n-1}{n} w_1^2 + \frac{1}{n} w_2^2                    \\
         & = n \rbr{\frac{n-1}{n^2} w_1^2 + \frac{1}{n^2} w_2^2}        \\
         & \leq n \rbr{\frac{(n-1)^2}{n^2} w_1^2 + \frac{1}{n^2} w_2^2} \\
         & = n \norm{\grad(w)}_2^2,
    \end{align*}
    which implies \( \rho \leq n \).
    It's easy to see that this is tight by taking \( w_1 = 0 \) and \( w_2 \neq 0
    \).

    Now we compute the values of \( \kappa \) and \( \tilde \kappa \).
    The Hessian is given by
    \[
        H =
        \begin{bmatrix}
            \frac{n - 1}{n} & 0           \\
            0               & \frac{1}{n}
        \end{bmatrix}
        ,
    \]
    and thus
    \begin{align*}
        \E_x\sbr{\norm{x}_2^2 (x x^\top)}
         & = \frac{n-1}{n} e_1 e_1^\top + \frac{1}{n} e_2 e_2^\top = H \\
        \E_x\sbr{\norm{x}^2_{H^{-1}} (x x^\top)}
         & = e_1 e_1^\top + e_2 e_2^\top = I \preceq n H,
    \end{align*}
    which implies \( \tilde \kappa = \kappa = n \).
    Substituting these values into the complexity bounds for stochastic AGD and
    MaSS completes the example.
\end{proof}

    \section{\textcolor{red}{Theoretical Issues in the Preprint}}\label{app:bug}


As stated in the preamble, the optimization schemes given in
\cref{eq:stochastic-agd} and \cref{eq:stochastic-agd-simple} are not
equivalent as \cref{sec:convergence} claims.
While the formal equivalence of the deterministic versions of these two
algorithms is proved by \citet[Eq. 2.2.20]{nesterov2004lectures}, their argument
fails in our setting because \cref{eq:stochastic-agd} combines a deterministic
estimating sequence with stochastic gradient updates.
In particular, the deterministic estimating sequence implies deterministic
gradient updates for \( \vk \), which cannot reconciled with the stochastic
updates for \( \xk \) when using the standard argument.
See \cref{sec:equivalence} for sketch of how the proof fails.

This bug does not falsify all results in this preprint.
Since the estimating sequence form of AGD in \cref{eq:stochastic-agd} is the target
of our theoretical analysis in \cref{sec:convergence},
all of the claimed convergence rates continue to hold for this scheme.
They fail only for the momentum version of stochastic AGD in
\cref{eq:stochastic-agd-simple}.
However, the estimating sequence form of AGD is not truly stochastic because,
as noted, \cref{eq:stochastic-agd} requires deterministic gradient updates for
\( \vk \).
Thus, we are only able to prove much weaker results than the original claims in
the preprint.

There are several potential ways to resolve this problem.
One approach is to change the estimating sequence to use stochastic gradient
updates.
That is, change \cref{eq:estimating-sequence} to,
\begin{equation}\label{eq:stochastic-estimating-sequence}
    \begin{aligned}
        \lambda_{k+1}
         & = (1 - \alpha_k) \lambda_k            \\
        \phi_{k+1}(w)
         & = (1 - \alpha_k) \phi_k(w) + \alpha_k
        (f(\yk) + \abr{\grad(\yk, \zk), w - \yk}
        + \frac{\mu}{2}\norm{w - \yk}_2^2).
    \end{aligned}
\end{equation}
Since \( \vkk \) is the minimizer of \( \phi_{k+1} \), this results in a
stochastic gradient update for \( \vkk \) in \cref{eq:stochastic-agd}.
Analyzing this fully stochastic version is straightforward if strong growth is
used to control the \( \norm{\nabla f(\yk, \zk)}_2^2 \) term which now appears
in \( \phi_{k+1}^* \).
Disappointingly, this approach leads to the \( \rho \) dependence
(as opposed to \( \sqrt{\rho} \)) previously established by
\citet{vaswani2019fast}.

Another approach is to retain a deterministic estimating sequence
while still modifying \cref{eq:stochastic-agd} to use a stochastic update
for \( \vk \).
In that case, we obtain two different \( \vk \) sequences: the true minimizers
of \( \phi_k \) and a sequence of unbiased stochastic estimates \( \tilde v_k
\) maintained by the algorithm.
While promising, this approach leads to the expectation
\( \E\sbr{\abr{\grad(\yk), \vk - \tilde{v}_k}} \) in the analysis of
\cref{prop:local-upper-bound}.
The issue here is that \( \tilde{v}_k \) is correlated with \( \grad(\yk) \)
as \( \yk \) is computed from \( \tilde{v}_k \), meaning this expectation
does not resolve to zero.
It is not clear whether or not a more careful analysis of this term can yield
the desired \( \sqrt{\rho} \) dependence.

Another option is that obtaining a \( \sqrt{\rho} \) dependence for stochastic
AGD is not possible.
That is, there is a lower bound showing that the \( O(\rho\sqrt{L/\mu}
\log(1/\epsilon)) \) complexity proved \citet{vaswani2019fast} is in fact tight.
We are investigating this possibility.

\subsection{Failed Equivalence Argument}\label{sec:equivalence}

Now we show how the standard equivalence argument fails.
Our goal is to eliminate the \( \vk \) sequence by writing \( \ykk \)
as a function of \( \yk \) and \( \wkk \).
Recalling that,
\[
    \yk
    = \frac{1}{\gamma_{k} + \alpha_{k}\mu}\sbr{\alpha_{k} \gamma_{k} \vk + \gamma_{k+1}\wk},
\]
we can re-arrange to obtain the following expression for \( \vk \),
\[
    \vk
    = \frac{1}{\alpha_k \gamma_k} ((\gamma_{k} + \alpha_k \mu) \yk - \gamma_{k+1} \wk).
\]
Then, starting from the definition of \( \vkk \) in \cref{eq:stochastic-agd}
and substituting in this value for \( \vk \), we obtain,
\begin{align*}
    \vkk
     & = \frac{1}{\gamma_{k+1}}\sbr{\frac{(1-\alpha_k)}{\alpha_k}
    ((\gamma_k + \alpha_k \mu)\yk - \gamma_{k+1}\wk) + \alpha_k\mu \yk - \alpha_k \grad(\yk)} \\
     & = \frac{(1-\alpha_k)\gamma_k}{\alpha_k \gamma_{k+1}}\yk + \frac{\mu}{\gamma_{k+1}} \yk
    - \frac{(1-\alpha_k)}{\alpha_k}\wk - \frac{\alpha_k}{\gamma_{k+1}} \grad(\yk)             \\
     & = \frac{1}{\alpha_k}\yk
    - \frac{(1-\alpha_k)}{\alpha_k}\wk - \frac{\etak}{\alpha_k} \grad(\yk)                    \\
     & = \wk + \frac{1}{\alpha_k}\rbr{\yk - \etak \grad(\yk) - \wk}                           \\
     & = \wk + \frac{1}{\alpha_k}\rbr{\wkk - \wk}
    + \textcolor{red}{\frac{\etak}{\alpha_k}\rbr{\grad(\yk, \zk) - \grad(\yk)}}.
\end{align*}
The proof now proceeds by substituting this expression for \( \vkk \)
into the equation for \( \ykk \).
Although the error term \( \frac{\etak}{\alpha_k}\rbr{\grad(\yk, \zk) - \grad(\yk)} \)
is expectation zero, it cannot be removed.
This breaks the proof that the schemes in \cref{eq:stochastic-agd} and
\cref{eq:stochastic-agd-simple} are equivalent.

\end{appendices}


\end{document}